\newcommand{\be}{\begin{equation}}
\newcommand{\ee}{\end{equation}}
\newcommand{\ben}{\begin{eqnarray*}}
\newcommand{\een}{\end{eqnarray*}}
\newcommand{\ds}{\displaystyle}
\newcommand{\intl}{\int\limits}
\newcommand{\R}{\mathbb R}
\newtheorem{theorem}{Theorem}[section]
\newtheorem{corollary}[theorem]{Corollary}
\newtheorem{definition}[theorem]{Definition}
\newtheorem{lemma}[theorem]{Lemma}
\newtheorem{proposition}[theorem]{Proposition}
\newtheorem{remark}[theorem]{Remark}
\numberwithin{equation}{section}
\begin{document}
\title{Superharmonic functions in the upper half space with a nonlocal boundary condition} 

\author[ ]{Marius Ghergu$^{1,2}$\,\footnote{ORCID: 0000-0001-9104-5295} }

\affil[ ]{$^1$School of Mathematics and Statistics}
\affil[ ]{University College Dublin, Belfield Campus, Dublin 4, Ireland} 
\affil[ ]{}

\affil[ ]{$^2$Institute of Mathematics of the Romanian Academy}
\affil[ ]{21 Calea Grivitei St.,  010702 Bucharest, Romania}
\affil[ ]{E-mail: {\tt marius.ghergu@ucd.ie}}



\maketitle

\begin{abstract} 
We discuss the existence of positive superharmonic functions $u$ in $\R^N_+=\R^{N-1}\times (0, \infty)$, $N\geq 3$, in the sense $-\Delta u=\mu$ for some Radon measure $\mu$, so that $u$ satisfies the nonlocal boundary condition 
$$
\frac{\partial u}{\partial n}(x',0)=\lambda \intl_{\R^{N-1}}\frac{u(y',0)^p}{|x'-y'|^k}dy' \quad\mbox{ on }\partial \R^N_+,
$$
where $p,\lambda>0$ and $k\in (0, N-1)$. First, we show that no solutions exist if $0<k\leq 1$. Next, if $1<k<N-1$, we obtain a new critical exponent  given by $p^*=\frac{N-1}{k-1}$ for the existence of such solutions. If $\mu\equiv 0$ we construct an exact solution for $p>p^*$ and discuss the existence of regular solutions, case in which we identify a second critical exponent given by $p^{**}=2\cdot \frac{N-1}{k-1}-1$. Our approach combines various integral estimates with the properties of the newly introduced $\alpha$-lifting operator and fixed point theorems. 
\end{abstract}

\noindent{\bf Keywords: } Superharmonic functions in the half space; measure data; $\alpha$-lifting operator; critical exponent; regular solutions.
\medskip

\noindent{\bf 2020 AMS MSC: } 45M20;  45G05; 31B10; 45E10; 45M05


\section{Introduction and the main results}
Let $N\geq 3$ be an integer and denote by $\R^N_+=\R^{N-1}\times (0, \infty)$ the upper half space in $\R^N$ whose boundary is $\partial \R^{N}_+=\R^{N-1}\times \{0\}\simeq \R^{N-1}$. The points  $x\in \R^N$ will be denoted by $x=(x', x_N)$, where $x'\in \R^{N-1}$ and $x_N\in \R$.  In this paper we investigate the existence of positive solutions to the problem
\begin{equation}\label{half1}
\begin{cases}
\ds -\Delta u=\mu &\quad\mbox{ in }\R^N_+,\\[0.2cm]
\ds \frac{\partial u}{\partial n}(x',0)=\lambda \intl_{\R^{N-1}}\frac{u(y',0)^p}{|x'-y'|^k}dy'&\quad\mbox{ on }\partial \R^N_+,
\end{cases}
\end{equation}
where $p, \lambda>0$, $k\in (0, N-1)$ and $\mu$ is a Radon measure on $\R^N$ whose support is contained in the open half space $\R^N_+$. We also assume that the Newtonian potential of $\mu$ given by
\begin{equation}\label{um}
U^\mu(x)=\frac{1}{(N-2)\sigma_N}\intl_{\R^N}\frac{d\mu(y)}{|x-y|^{N-2}},
\end{equation}
is finite for all $x\in \R^N$.
In the above, $\sigma_N$ denotes the surface area of the $N-1$ dimensional unit sphere while in \eqref{half1}, $n=(0,0,\dots, 0, -1)\in \R^N$ denotes the outer unit normal at $\partial \R^{N}_+$, so that 
$$
\ds \frac{\partial u}{\partial n}(x',0)=-\frac{\partial u}{\partial x_N}(x',0).
$$
The study of \eqref{half1} is motivated by a number of works started in the mid 1990s related to harmonic functions in the upper half space which satisfy some nonlinear boundary conditions.  Using the moving plane method, Hu \cite{H94} obtained that the problem 
\begin{equation}\label{half2}
\begin{cases}
\ds \Delta u=0 &\quad\mbox{ in }\R^N_+,\\[0.2cm]
\ds \frac{\partial u}{\partial n}=\lambda u^p &\quad\mbox{ on }\partial \R^N_+,
\end{cases}
\end{equation}
has no positive solutions if $1\leq p<\frac{N}{N-2}$ and $\lambda>0$. Ou \cite{O96} proved that the exponent $\frac{N}{N-2}$ is optimal as for $p=\frac{N}{N-2}$ and $\lambda>0$, all positive solutions of \eqref{half2} are of the form $c|x-x_0|^{2-N}$ for some $x_0\in \R^N\setminus\overline{\R^N_+}$ and $c>0$. A full characterization of solutions to \eqref{half2} in the case $p=\frac{N}{N-2}$ and $\lambda\in \R$ is obtained in Li and Zhu \cite{LZ95} using the method of moving spheres. Precisely, it is obtained in \cite{LZ95} that:
\begin{itemize}
\item If $\lambda\leq 0$ then all nonnegative solutions of \eqref{half2} are of the form $u(x)=ax_N+b$ where $a,b\geq 0$, $a=-\lambda b^{\frac{N}{N-2}}$.

\item If $\lambda>0$ then all nonnegative solutions of \eqref{half2} are of the form
$$
u(x)=\Big(\frac{\varepsilon}{(x_N+\frac{\varepsilon\lambda}{N-2})^2+|x'-x_0'|^2}\Big)^{\frac{N-2}{2}}\quad \mbox{ where }\varepsilon>0.
$$
\end{itemize}
The case $\lambda=1$ and $p>\frac{N}{N-2}$ was discussed in \cite{H14}. 
Further, the more general problem 
$$
\begin{cases}
\ds \Delta u=au^q &\quad\mbox{ in }\R^N_+,\\[0.2cm]
\ds \frac{\partial u}{\partial n}=\lambda u^p &\quad\mbox{ on }\partial \R^N_+,
\end{cases}
$$
was investigated in \cite{CSF96, LZ03, LZ99, TWZ22}. The existence of sign-changing solutions to
$$
\begin{cases}
\ds \Delta u=0 &\quad\mbox{ in }\R^N_+,\\[0.2cm]
\ds \frac{\partial u}{\partial n}+\lambda u=u |u|^{p-1}+f(x) &\quad\mbox{ on }\partial \R^N_+,
\end{cases}
$$
was obtained in \cite{FMM13} by using the contraction mapping argument under the hypothesis of small data $f\in L^{\frac{(p-1)(N-1)}{p}}(\R^{N-1})$ and $p>\frac{N-1}{N-2}$. Different results, based on Fourier analysis method,  were obtained in \cite{CF21}.

In the present work, we investigate superharmonic functions in the half space $\R^N_+$ which satisfy a new type of nonlinear condition on $\partial \R^N_+$. Unlike the boundary condition imposed in \eqref{half2}, we prescribe a nonlocal condition given by
$$
\frac{\partial u}{\partial n}(x',0)=H_u(x') \quad\mbox{ on }\partial \R^N_+,
$$
where $H_u$ is the convolution  term
\begin{equation}\label{hu}
H_u(x') :=u(\cdot, 0)^p\ast |x'|^{-k}=\intl_{\R^{N-1}}\frac{u(y',0)^p}{|x'-y'|^k}dy',\quad x'\in \R^{N-1}.
\end{equation}
We  emphasize a new critical exponents which separates the existence and nonexistence regions of positive solutions to \eqref{half1}. Our approach is new and differs from the previous methods employed for instance in \cite{CF21, CL24, CL21, CLO06, CLO07, FMM13, LL16, YYL04, LZ03, LL20, TWZ22} as follows:
\begin{itemize}
\item We introduce the $\alpha$-lifting operator $J_\alpha:L^q(\R^{N-1})\to C^{0,\gamma}(\R^N)$ for suitable $q>1>\gamma>0$, see \eqref{lift1} in Section 2.2. This allows us to formulate the positive solutions of \eqref{half1} in terms of integral operators.
\item The existence of positive solutions to \eqref{half1} is achieved in the case $\mu\not\equiv 0$ by means of the Schauder Fixed Point Theorem in a subspace of $C(\overline B_R)$, for some $R>0$.
\end{itemize}
Recall that 
$$
\Phi(x)=\frac{1}{(N-2)\sigma_N}|x|^{2-N},
$$ 
is the fundamental solution of the Laplace operator in $\R^N$ in the sense that
$$
-\Delta \Phi=\delta_0\quad\mbox{ in }\mathcal{D}'(\R^N),
$$
where $\delta_0$ is the Dirac delta measure in $\R^N$ concentrated at the origin. Let
  
\begin{equation}\label{green}
G(x,y)=\Phi(x-y)+\Phi(\overline x-y), \quad x,y\in \overline{\R^N_+},
\end{equation} 
be the Green function of $-\Delta$ in $\R^N_+$, subject to Neumann boundary condition at $\partial\R^N_+$. In \eqref{green}, 
$\overline  x=(x', -x_N)$ denotes the  reflection of $x=(x', x_N)\in \overline{\R^N_+}$ with respect to the $x_N=0$ hyperplane.  Let us just recall that $G$ satisfies
$$
\begin{cases}
-\Delta G(\cdot, y)=\delta_y\quad\mbox{ in }\mathcal{D}'(\R^N_+)\\[0.2cm]
\ds \frac{\partial G}{\partial n}(\cdot,y) =0 \quad\mbox{ on }\partial \R^N_+ 
\end{cases}
\quad\mbox{ for all }y\in \R^N_+.
$$
Using the Green identities, we can formally convert \eqref{half1} into the integral equation 
\begin{equation}\label{l20}
u(x)=\intl_{\R^N_+}G(x,y)d\mu(y)+\lambda \intl_{\R^{N-1}} G(x, (y',0)) H_u(y') dy' \quad\mbox{ a.e. in }\R^N_+,
\end{equation}
that is,
\begin{equation}\label{l2}
u(x)=\intl_{\R^N_+} G(x,y)d\mu+\frac{2\lambda}{(N-2)\sigma_N} \intl_{\R^{N-1}} \intl_{\R^{N-1}} 
\frac{u(z',0)^p}{|x-(y',0)|^{N-2}|y'-z'|^k}dz' dy',
\end{equation}
almost everywhere in $\R^N_+$. 
The precise meaning of the above equality is given in the definition below.

\begin{definition} We say that a measurable function $u$ is a positive solution of \eqref{half1} if:
\begin{enumerate}
\item[\rm (i)] $u>0$ a.e. in $\R^N_+$ and for almost all $x'\in \R^{N-1}$ we have 
\begin{equation}\label{l1}
\lim_{z\in \R^N_+, \, z\to (x',0) } u(z)=u(x',0)<\infty.
\end{equation}
\item[\rm (ii)] $H_u$ defined in \eqref{hu} is finite a.e. in $\R^{N-1}$ and 
\begin{equation}\label{l10}
\intl_{\R^{N-1}} \frac{H_u(y') dy'}{|x-(y',0)|^{N-2}}<\infty \quad \mbox{\; for almost all  }x\in \R^N_+. 
\end{equation}
\item[\rm (iii)] $u$ satisfies \eqref{l2} a.e. in $\R^N_+$. 
\end{enumerate}
\end{definition}
The above definition of a solution to \eqref{half1} does not require extra regularity on $u$, such as $u\in W^{1,q}_{loc}(\R^N_+)$.
Precisely, condition \eqref{l1} states that $u(x',0)$ is well defined on $\partial \R^N_+$ as the limit of $u(z)$ when $z\in \R^N_+$  approaches $(x', 0)$. We do not need to resort to the trace operator which assumes $u$ lie in a certain locally Sobolev space. 
Finally, \eqref{l10} is a natural  requirement to make the last integral in \eqref{l2} finite. We shall see that our solutions to \eqref{half1} are constructed such that $u-\int_{\R^N_+} G(\cdot, y) d\mu(y) $ is  H\"older continuous  in $\overline{\R^N_+}$; see \eqref{vgre} below.
Our first result discusses the existence of a positive solution in the case $\mu\not\equiv 0$.
\begin{theorem}\label{thm1} {\sf (Case $\mu\not\equiv 0$)}
\smallskip

Assume $\mu\not\equiv 0$, $k\in (0, N-1)$ and $p>0$.
\begin{enumerate}
\item[\rm (i)] If one of the following conditions hold:
\begin{itemize}
\item[\rm (i1)] $0<k\leq 1$ and $p>0$;
\item[\rm (i2)] $1<k<N-1$ and $0<p<\frac{N-1}{k-1}$;
\end{itemize}
then, for all $\lambda>0$ the problem \eqref{half1} has no positive solutions.
\item[\rm (ii)] If $1<k<N-1$ and $p=\frac{N-1}{k-1}$ then,  for all $\lambda>0$ the problem \eqref{half1} has no positive solutions $u$ with $u(\cdot, 0)\in L^p(\R^{N-1})$.
\item[\rm (iii)] If $1<k<N-1$, $p>\frac{N-1}{k-1}$  and $\mu$ satisfies 
\begin{equation}\label{munu}
U^{\mu}(x)\leq C(N,\mu)(1+|x|)^{1-k}\quad\mbox{ a.e. in } \R^N,
\end{equation}
for some constant $C(N, \mu)>0$, then,  there exists $\lambda^*>0$ such that the problem \eqref{half1} has a positive solution $u$ for all $0<\lambda<\lambda^*$. 
Moreover, 
\begin{equation}\label{vgre}
u-\intl_{\R^N_+} G(\cdot, y) d\mu(y) \in C^{0,\gamma}(\overline{\R^N_+})\quad\mbox{ for some }\gamma\in (0,1).
\end{equation}
\end{enumerate} 
\end{theorem}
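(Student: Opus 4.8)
The three parts require quite different techniques, so I would organize the argument accordingly.

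For part (i), the nonexistence for $0<k\le 1$ (case i1) and for $1<k<N-1$ with $0<p<\frac{N-1}{k-1}$ (case i2), I would argue by contradiction. Suppose $u$ is a positive solution. Since $\mu\not\equiv 0$ and $u$ satisfies \eqref{l2}, the first term $\int_{\R^N_+}G(x,y)\,d\mu(y)$ is a positive superharmonic function, bounded below near $\partial\R^N_+$ by a positive constant $c_0>0$ on, say, a half-ball $B_\rho^+$ touching the boundary (using that $G(x,(y',0))=2\Phi(x-(y',0))$ does not vanish on the boundary). Hence $u(y',0)\ge c_0>0$ on a ball $B'\subset\R^{N-1}$. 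Feeding this into $H_u(x')=\int_{\R^{N-1}}|x'-y'|^{-k}u(y',0)^p\,dy'$, I get $H_u(x')\ge c_1\int_{B'}|x'-y'|^{-k}\,dy'$. When $k\le 1$ this integral already diverges to $+\infty$ as $|x'|\to\infty$ slowly enough — more precisely $H_u$ is everywhere comparable to $+\infty$ on a set of positive measure unless $u(\cdot,0)^p$ has enough decay; the key point is that the pointwise lower bound $u(y',0)\ge c_0$ on $B'$ together with $k\le 1$ forces $H_u\equiv\infty$ on all of $\R^{N-1}$, contradicting condition (ii) in the definition of solution. For case i2, the contradiction is subtler and is the real obstacle: I would iterate. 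Knowing $u(\cdot,0)\ge c_0$ on $B'$ gives, via \eqref{l2} restricted to the boundary (i.e. $u(x',0)\ge \frac{2\lambda}{(N-2)\sigma_N}\int\int |x'-y'|^{-(N-2)}|y'-z'|^{-k}u(z',0)^p\,dz'\,dy'$, after the convolution of two Riesz kernels $|x'|^{-(N-2)}\ast|x'|^{-k}\sim |x'|^{-(N-2+k-(N-1))}=|x'|^{1-k}$ in $\R^{N-1}$), a lower bound $u(x',0)\ge c_1(1+|x'|)^{1-k}$ off $B'$. Re-inserting this improved lower bound into $H_u$ and then into \eqref{l2} yields $u(x',0)\ge c_2(1+|x'|)^{(1-k)+p(1-k)+\dots}$, and the exponent improves at each step; the recursion $a_{n+1}=1-k+p\,a_n\cdot(\text{adjustment})$ diverges precisely when $p<\frac{N-1}{k-1}$ (here $1-k<0$), eventually forcing the convolution integrals to diverge and contradicting \eqref{l10} or the finiteness of $H_u$. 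The hard part is making the bootstrap bookkeeping of exponents rigorous, in particular verifying that the Riesz-type convolution estimate $|\,\cdot\,|^{-a}\ast|\,\cdot\,|^{-b}\sim|\,\cdot\,|^{N-1-a-b}$ in $\R^{N-1}$ applies at each stage and that the constants stay positive.

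For part (ii), the borderline case $p=p^*=\frac{N-1}{k-1}$ with $u(\cdot,0)\in L^p(\R^{N-1})$, I expect the standard scaling/Pohozaev-type obstruction to fail because there is no dilation invariance with $\mu\not\equiv 0$; instead I would again bootstrap but now track an $L^p$ (or weak-$L^p$) norm rather than a pointwise bound. From $u\ge \int G\,d\mu$ and the boundary integral inequality, one shows $u(\cdot,0)\notin L^p$ unless the iteration stabilizes, but at the critical exponent the Hardy–Littlewood–Sobolev inequality is an equality only in a limiting sense, so the assumption $u(\cdot,0)\in L^p$ combined with $u(\cdot,0)\ge c_0$ on a set of positive measure $B'$ is already contradictory: $\int_{B'}u(\cdot,0)^p\ge c_0^p|B'|>0$ is fine, but the lower bound $u(x',0)\ge c_1|x'|^{1-k}$ for large $|x'|$ (derived as above, which requires only that $u(\cdot,0)^p$ is not identically small) gives $\int_{|x'|>R}|x'|^{p(1-k)}\,dx'=\int_{|x'|>R}|x'|^{-(N-1)}\,dx'=+\infty$ since $p(1-k)=-\frac{N-1}{k-1}(k-1)=-(N-1)$. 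This logarithmic divergence is exactly the critical obstruction, so the contradiction with $u(\cdot,0)\in L^p(\R^{N-1})$ is immediate once the pointwise lower bound is established.

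For part (iii), the existence for $p>p^*$ under \eqref{munu}, I would set up a fixed-point scheme, as announced in the introduction. Write $u=w+U_\mu^G$ where $U_\mu^G(x)=\int_{\R^N_+}G(x,y)\,d\mu(y)$, so that $w$ must solve $w(x)=\frac{2\lambda}{(N-2)\sigma_N}\int\int|x-(y',0)|^{-(N-2)}|y'-z'|^{-k}(w(z',0)+U_\mu^G(z',0))^p\,dz'\,dy'$. The plan is: (1) using \eqref{munu}, show $U_\mu^G(x',0)\le C(1+|x'|)^{1-k}$ on the boundary; (2) introduce the $\alpha$-lifting operator $J_\alpha$ from Section 2.2 with $\alpha$ chosen so that $J_\alpha$ maps the relevant weighted $L^q$ space into $C^{0,\gamma}(\overline{\R^N_+})$, and rewrite the right-hand side as $\lambda\,J_\alpha$ applied to $H_{w+U_\mu^G}$; (3) work in the closed convex set $K=\{w\in C(\overline{B_R^+}): 0\le w(x)\le M(1+|x|)^{1-k}\}$ (or the analogous weighted-continuous-function ball on all of $\overline{\R^N_+}$), and verify that for $\lambda$ small the map $T(w):=\lambda J_\alpha(H_{w+U_\mu^G})$ sends $K$ into itself — this is where $p>p^*$ enters, since the convolution $|x'|^{-(N-2)}\ast(\text{something}\sim|x'|^{p(1-k)})$ produces decay $|x'|^{1-k+p(1-k)-(1-k)}\cdot(\dots)$ that is summable/bounded by $(1+|x'|)^{1-k}$ precisely because $p(1-k)<1-k-(N-1)\iff p>\frac{N-1}{k-1}$ fails... — I must be careful here and instead check directly that the iterated Riesz potential of $(1+|z'|)^{p(1-k)}$ decays at least like $(1+|x'|)^{1-k}$, which holds for $p>p^*$; (4) establish compactness of $T$ on $K$ (using the Hölder bound from $J_\alpha$ and Arzelà–Ascoli on $\overline{B_R^+}$, plus the uniform weighted decay to control behavior at infinity) and continuity of $T$; (5) apply the Schauder Fixed Point Theorem to obtain $w\in K$, and conclude $u=w+U_\mu^G$ is the desired solution, with \eqref{vgre} following from $w\in C^{0,\gamma}$ courtesy of the mapping property of $J_\alpha$. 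The main obstacle throughout is step (3): pinning down the exact weighted space and the precise smallness threshold $\lambda^*$ so that $T$ preserves both the pointwise bound $0\le w\le M(1+|x|)^{1-k}$ and enough integrability for $H_{w+U_\mu^G}$ to be finite and for $J_\alpha$ to apply — in other words, verifying the self-map property simultaneously with the requisite convolution decay estimates is the technical heart of the proof.
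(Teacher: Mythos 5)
Your overall skeleton follows the paper, but part (i1) as written contains a false step, and the quantitative cores of (i2) and (iii) are left unexecuted. In (i1) you claim that $u(\cdot,0)\ge c_0>0$ on a bounded ball $B'$ together with $k\le 1$ "forces $H_u\equiv\infty$". It does not: $H_u(x')\ge c_0^p\int_{B'}|x'-y'|^{-k}\,dy'$ is finite for every $x'$ whenever $k<N-1$ (this quantity behaves like $|x'|^{-k}$ at infinity), so no contradiction with the finiteness of $H_u$ in condition (ii) of the definition can be reached this way. The divergence for $0<k\le 1$ occurs one layer up: the lower bound $H_u(y')\ge c(1+|y'|)^{-k}$ makes $\int_{\R^{N-1}}|x-(y',0)|^{2-N}H_u(y')\,dy'$ diverge, because the integrand decays no faster than $|y'|^{-(N-2+k)}$ and $N-2+k\le N-1$; this contradicts \eqref{l10} (equivalently, it forces $u(x',0)=\infty$, contradicting \eqref{l1}), which is exactly the mechanism the paper uses. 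For (i2) your plan --- compose the two kernels via $|x'|^{2-N}\ast|x'|^{-k}\simeq |x'|^{1-k}$ in $\R^{N-1}$ and bootstrap power-type lower bounds --- is the paper's, but the recursion you sketch ($a_{n+1}=1-k+p\,a_n\cdot(\mbox{adjustment})$) is not the right one and you concede the bookkeeping is open; the paper runs the induction with $\gamma_0=k-1$, $\gamma_{n+1}=p\gamma_n+k-N$, shows this sequence eventually drops below $0$ precisely when $p<\frac{N-1}{k-1}$, and derives the contradiction at the first index with $\gamma_{n+1}\le 0$. Your part (ii) does coincide with the paper: once $u(x',0)\ge c|x'|^{1-k}$ is known, $p(1-k)=-(N-1)$ rules out $u(\cdot,0)\in L^p(\R^{N-1})$.

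Your part (iii) outline is essentially the paper's: the weight $(1+|x|)^{1-k}$, invariance of the weighted set for small $\lambda$ via the two convolution estimates ($(1+|z'|)^{-p(k-1)}\ast|z'|^{-k}\lesssim (1+|y'|)^{-k}$, which needs $p(k-1)>N-1$, followed by the lift against $|x-(y',0)|^{2-N}$ giving $(1+|x|)^{1-k}$), compactness from the H\"older bound supplied by the lifting operator combined with the Sobolev inequality for Riesz potentials, and Schauder's theorem. Two points you gloss over are genuinely needed: the convolution estimates are nontrivial lemmas in their own right (Lemmas \ref{lint1} and \ref{lint2}); and since $U^\mu$ is only assumed finite and to satisfy \eqref{munu} a.e., the paper first approximates $\mu$ by measures with continuous Newtonian potentials (Landkof), solves truncated problems on balls $B_R'$ inside $C(\overline B_R)$, and passes to the limit by a diagonal argument using the uniform $C^{0,\gamma}$ bounds. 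Your variant of fixing the point for $w=u-\int_{\R^N_+}G(\cdot,y)\,d\mu(y)$ could conceivably bypass that approximation, but then the continuity and compactness of the map on the unbounded half-space (or the truncation-and-limit step) and the finiteness of the nonlocal term must be verified, none of which is carried out; as it stands, the "technical heart" you identify in step (3) is precisely what remains unproven.
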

Theorem \ref{thm1} highlights the role played by the critical exponent $p^*=\frac{N-1}{k-1}$ in the case $1<k<N-1$. Precisely, if $0<p<p^*$ then no positive solutions of \eqref{half1} exist while if $p>p^*$, then \eqref{half1} admits always a solution. Such a solution is constructed by writing \eqref{l2} as
$$
u=\intl_{\R^N_+} G(\cdot ,y)d\mu(y)+\frac{2\lambda}{(N-2)\sigma_N} J_1\circ H_u \quad \mbox{ a.e. in }\R^N_+,
$$
where $H_u$ is defined in \eqref{hu} and $J_1$ is the $1$-lifting operator from $L^q(\R^{N-1})$ into $C^{0,\gamma}(\R^N)$ which we introduce and study in Section 2.2 below. Note that $H_u$ is a Riesz potential in $\R^{N-1}$ so that, combining the properties of $J_1$ and $H_u$ with a fixed point argument we obtain the existence of a positive solution to \eqref{half1}.

We next discuss the case $\mu\equiv 0$. The solutions of \eqref{half1} are required to satisfy \eqref{l1}, \eqref{l10} and \eqref{l2}. Notice that \eqref{l2} now reads
\begin{equation}\label{l3}
u(x)=\frac{2\lambda}{(N-2)\sigma_N} \intl_{\R^{N-1}} \intl_{\R^{N-1}} 
\frac{u(z',0)^p}{|x-(y',0)|^{N-2}|y'-z'|^k}dz' dy'\quad\mbox{ a.e. in }\R^N_+.
\end{equation}
Following the terminology introduced in \cite{CLO05, CLO06}, a solution $u$ of \eqref{l3} is called {\it regular} if  
$$
1<k<N-1 \quad\mbox{ and }\quad u(\cdot, 0)\in L^{\frac{2(N-1)}{k-1}}_{loc}(\R^{N-1}).
$$ 
Our main result in this case is stated below.

\begin{theorem}\label{thm2} {\sf (Case $\mu\equiv 0$)}
\smallskip

Assume $\mu\equiv 0$, $k\in (0, N-1)$ and $p,\lambda>0$.
\begin{enumerate}
\item[\rm (i)] The conclusion of Theorem \ref{thm1}(i)-(ii) holds.
\item[\rm (ii)] If 
\begin{equation}\label{iff}
1<k<N-1\quad\mbox{ and }\quad p> \frac{N-1}{k-1},
\end{equation}
then, there exists $C=C(N,\lambda,p,k)>0$ such that
\begin{equation}\label{munu0}
u(x)=C\intl_{\R^{N-1}} \frac{dy'}{|x-(y',0)|^{N-2} |y'|^{1+\frac{N-k}{p-1}} } \quad\mbox{ a.e. in } \R^N_+,
\end{equation}
is a solution of \eqref{half1}. Moreover, $u$ given by \eqref{munu0} is regular if and only if $p>2\cdot \frac{N-1}{k-1}-1$.

\item[\rm (iii)] If $1<k<N-1$ and $0<p< 2\cdot \frac{N-1}{k-1}-1$, then \eqref{half1} has no regular solutions.
\item[\rm (iv)] If $1<k<N-1$ and $p=2\cdot \frac{N-1}{k-1}-1$, then all regular solutions of \eqref{half1} are of the form
\begin{equation}\label{regs}
u_{\zeta',t}(x)=C(N,k,\lambda) \intl_{\R^{N-1}} \intl_{\R^{N-1}} 
\frac{\ds \Big(\frac{t}{t^2+|z'-\zeta'|^2}\Big)^{N-\frac{k+1}{2}}}{|x-(y',0)|^{N-2}|y'-z'|^k}dz' dy',
\end{equation}
where $\zeta'\in \R^{N-1}$ and $t>0$.
\end{enumerate} 
\end{theorem}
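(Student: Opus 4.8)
The plan is to collapse the two nested integrals in \eqref{l3} into a single convolution on the boundary $\R^{N-1}$, thereby reducing \eqref{half1} to the critical Hardy--Littlewood--Sobolev integral equation, whose positive solutions are classified in \cite{CLO05,CLO06}. \textbf{Step 1 (reduction to the boundary).} Given a regular solution $u$ of \eqref{half1}, I would set $v:=u(\cdot,0)$, so that $v>0$ a.e.\ and $v\in L^{\frac{2(N-1)}{k-1}}_{loc}(\R^{N-1})$ by definition. For a.e.\ $x'\in\R^{N-1}$, \eqref{l3} holds at $(x',x_N)$ for a sequence $x_N\to 0^+$ (apply Fubini to the exceptional null set); the left-hand side tends to $v(x')$ by \eqref{l1}, while the right-hand side increases to its value at $(x',0)$ by monotone convergence, since $|x'-y'|^{2}+x_N^{2}\downarrow|x'-y'|^{2}$. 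Hence $v(x')$ equals the right-hand side of \eqref{l3} evaluated at $(x',0)$ for a.e.\ $x'$. Interchanging the order of integration by Tonelli's theorem, the decisive ingredient is the Riesz composition identity in $\R^{N-1}$,
\begin{equation*}
\intl_{\R^{N-1}}\frac{dy'}{|x'-y'|^{N-2}\,|y'-z'|^{k}}=\frac{c(N,k)}{|x'-z'|^{k-1}},\qquad x'\ne z',
\end{equation*}
whose validity --- both convergence of the left-hand integral and the exact exponent $-(k-1)$ --- is guaranteed precisely by the range $1<k<N-1$. This turns \eqref{l3} into the single boundary equation
\begin{equation}\label{bdryeq}
v(x')=A\intl_{\R^{N-1}}\frac{v(z')^{p}}{|x'-z'|^{k-1}}\,dz',\qquad A:=\frac{2\lambda\,c(N,k)}{(N-2)\sigma_N}>0 .
\end{equation}
Reversing the computation, any positive $v\in L^{\frac{2(N-1)}{k-1}}_{loc}(\R^{N-1})$ solving \eqref{bdryeq} produces, via the right-hand side of \eqref{l3} with boundary data $v^{p}$, a regular solution of \eqref{half1} with trace $v$ (one checks \eqref{l1}, \eqref{l10} and $H_u<\infty$ a.e.\ directly, as in the analysis of the lifting operator in Section 2); thus $u\mapsto u(\cdot,0)$ is a bijection between the two solution sets, and it remains to classify the solutions of \eqref{bdryeq}.

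\textbf{Step 2 (identification and classification).} Write $n:=N-1$ and $\alpha:=N-k$, so that $\alpha\in(1,N-1)$ and $n-\alpha=k-1\in(0,n)$; the hypothesis $p=p^{**}:=2\cdot\frac{N-1}{k-1}-1$ is then exactly $p=\frac{n+\alpha}{n-\alpha}$. After normalizing the constant $A$ in \eqref{bdryeq} by a positive rescaling $v\mapsto\kappa v$, the equation becomes the critical Hardy--Littlewood--Sobolev integral equation
\begin{equation*}
w(x')=\intl_{\R^{n}}\frac{w(z')^{\frac{n+\alpha}{n-\alpha}}}{|x'-z'|^{\,n-\alpha}}\,dz',\qquad x'\in\R^{n},
\end{equation*}
and the regularity requirement $v\in L^{\frac{2(N-1)}{k-1}}_{loc}=L^{\frac{2n}{n-\alpha}}_{loc}(\R^{n})$ is precisely the hypothesis under which all positive solutions of this equation are classified in \cite{CLO05,CLO06} by the integral form of the moving plane/sphere method (after a regularity-lifting bootstrap, if needed, to render $w$ locally bounded): every such $w$ equals $c\big(t/(t^{2}+|x'-\zeta'|^{2})\big)^{(n-\alpha)/2}$ for some $t>0$, $\zeta'\in\R^{n}$. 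Undoing the rescaling, every positive $L^{\frac{2(N-1)}{k-1}}_{loc}$ solution of \eqref{bdryeq} has the form
\begin{equation*}
v(x')=a\Big(\frac{t}{t^{2}+|x'-\zeta'|^{2}}\Big)^{\frac{k-1}{2}},\qquad t>0,\ \ \zeta'\in\R^{N-1},
\end{equation*}
with $a=a(N,k,\lambda)>0$ fixed by substitution into \eqref{bdryeq}, while $t$ and $\zeta'$ are free.

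\textbf{Step 3 (back to $u$) and the main difficulty.} Since $\frac{(k-1)p}{2}=N-\frac{k+1}{2}$ when $p=p^{**}$, one has $v(z')^{p}=a^{p}\big(t/(t^{2}+|z'-\zeta'|^{2})\big)^{N-(k+1)/2}$; inserting this into \eqref{l3} and absorbing $a^{p}$ together with $\frac{2\lambda}{(N-2)\sigma_N}$ into a single constant $C(N,k,\lambda)>0$ reproduces exactly \eqref{regs}. By the bijection of Step 1, these account for all regular solutions of \eqref{half1}, and conversely each function \eqref{regs} is a regular solution (the associated boundary function above solves \eqref{bdryeq}). I expect the main obstacle to be Step 1: carrying out the reduction rigorously within the merely measurable notion of solution of \eqref{half1} --- justifying the passage to the boundary trace through \eqref{l1}--\eqref{l10}, the interchange of integrals, and the pointwise a.e.\ Riesz composition identity, and, in the converse direction, verifying that a solution of \eqref{bdryeq} lifts back to a function satisfying \eqref{l1}, \eqref{l10} and \eqref{l2} with the correct trace. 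Once \eqref{bdryeq} is secured, the rest is a direct invocation of the critical Hardy--Littlewood--Sobolev classification together with the elementary identities $p^{**}=\frac{n+\alpha}{n-\alpha}$ and $\frac{(k-1)p^{**}}{2}=N-\frac{k+1}{2}$.
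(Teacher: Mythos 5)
Your proposal, as written, proves only part (iv) of the theorem. The statement has four parts, and three of them are simply not addressed: (i) the nonexistence results of Theorem \ref{thm1}(i)--(ii) in the case $\mu\equiv 0$ (in the paper these follow from the same boundary reduction you perform in Step 1, after which the iteration argument of Section 3 applies to the resulting equation \eqref{clo}); (ii) the construction of the explicit solution \eqref{munu0} for every $p>\frac{N-1}{k-1}$, which requires the composition formula \eqref{selb0} with the \emph{general} exponents $a=N-k-1$ and $b=\frac{p(N-k)}{p-1}$ (one must check $0<a<b<N-1$, which is where $p>\frac{N-1}{k-1}$ enters), together with the elementary computation that $u(\cdot,0)\simeq |x'|^{-\frac{N-k}{p-1}}$ lies in $L^{\frac{2(N-1)}{k-1}}_{loc}$ if and only if $p>p^{**}$; and (iii) the nonexistence of regular solutions for $0<p<p^{**}$, which needs Lemma \ref{cloo}(i) of Chen--Li--Ou for $1<p<p^{**}$ \emph{and} the earlier nonexistence argument to cover $0<p\leq 1$. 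None of these is a routine corollary of your Steps 1--3, so the omission is a genuine gap rather than a matter of brevity.

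For part (iv) itself your route is essentially the paper's: pass to the boundary using \eqref{l1}, the composition identity \eqref{selb} and an interchange of integrals (the paper uses dominated convergence with majorant $H_u(y')/|x'-y'|^{N-2}$, you use monotone convergence along $x_N\downarrow 0$ — both are fine), identify the resulting equation with \eqref{clo} at the critical exponent, invoke Lemma \ref{cloo}(ii), and substitute the bubble back into \eqref{l3} to obtain \eqref{regs}. The one point you flag but do not carry out — the converse verification that each function \eqref{regs} really is a regular solution, i.e.\ that it satisfies \eqref{l3}, \eqref{l1} and \eqref{l10} — is done concretely in the paper: one applies \eqref{selb}, uses that $v_{\zeta',t}$ solves \eqref{clo} to collapse the inner integral (leading to \eqref{regsss0}--\eqref{regsss}), and then checks \eqref{l1} and \eqref{l10} by dominated convergence thanks to the explicit decay of the bubble. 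If you complete parts (i)--(iii) along the lines above and make this converse check explicit, your argument matches the paper's proof.
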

In relation to regular solutions, another critical exponent stands out from Theorem \ref{thm2}, namely 
$$
p^{**}=2\cdot \frac{N-1}{k-1}-1.
$$
We see from Theorem \ref{thm2} that 
the problem \eqref{half1} with $\mu\equiv 0$ has regular solutions if and only if $p\geq p^{**}$.

The remaining part of the manuscript is organised as follows. In Section 2 we collect some basic facts about the Riesz potentials, introduce the $\alpha$-lifting operator and derive some useful integral estimates. The proofs of Theorem \ref{thm1} and Theorem \ref{thm2} are provided in Section 3 and Section 4 respectively. 

Before we conclude this section, let us precise several notations that will be used throughout this paper.

\begin{itemize}
\item $B_R$ (resp. $B_R'$) denotes the open ball in $\R^N$ (resp. in $\R^{N_-1}$) centred at the origin and having radius $R>0$.

\item If $\nu$ is a Radon measure on $\R^N$, its Newtonian potential is denoted by 
$$
U^\nu(x)=\frac{1}{(N-2)\sigma_N}\intl_{\R^N}\frac{d\nu(y)}{|x-y|^{N-2}}.
$$
We also denote by $\overline \nu$ the reflection of $\nu$ with respect to the hyperplane $x_N=0$, that is, 
$\overline \nu(E)=\nu(F)$, where $F=\{x=(x', x_N)\in \R^N: \overline x=(x', -x_N)\in E\}$.

\item By $c,C, C_1, C_2$ we denote positive constants whose value may change on any occurrence. Whenever relevant, their dependence on the parameters will be given.
\end{itemize}

\section{Preliminary results}

\subsection{Some properties of Riesz potentials}
The Riesz potential of order $\alpha\in (0, N-1)$ of a measurable function $f:\R^{N-1}\to \R$  is defined by
\begin{equation}\label{rff}
I^{N-1}_\alpha f(x')=\intl_{\R^{N-1}}\frac{f(y')}{|x'-y'|^{N-1-\alpha}} dy'\quad \mbox{  for all } x'\in \R^{N-1}.
\end{equation}
We notice (see \cite[Lemma 2.1]{GLMM23}) that $I^{N-1}_\alpha f$ is finite as long as $f$ satisfies the integral condition
\begin{equation}\label{s0}
\intl_{\R^{N-1}}\frac{f(y')}{1+|y'|^{N-1-\alpha}} dy'<\infty.
\end{equation}
In particular, \eqref{s0} is fulfilled for all $f\in L^1(\R^{N-1})$. Furthermore, the following Sobolev inequality holds (see \cite[Theorem 1, Chapter 5]{S71}):
\begin{equation}\label{HLS1}
\|I_\alpha^{N-1} f\|_{L^{q}(\R^{N-1})}\leq C(N,s,\alpha) \|f\|_{L^s(\R^{N-1})}\quad\mbox{ for all }f\in L^s(\R^{N-1}),
\end{equation}
where
\begin{equation}\label{HLS2}
1<s<\frac{N-1}{\alpha}\quad\mbox{ and}\quad q=\frac{(N-1)s}{N-1-\alpha s}.
\end{equation}

 \subsection{The lifting operator}

Let $f:\R^{N-1}\to \R$ and $\alpha\in (0, N-1)$. We define the $\alpha$-lifting of $f$ in $\R^N$ by 
\begin{equation}\label{lift1}
J_\alpha f(x)=\intl_{\R^{N-1}}\frac{f(y')}{|x-(y',0)|^{N-1-\alpha}} dy'\quad\mbox{ for all }x\in \R^N.
\end{equation}
In this way, $f$ is mapped  to a function $J_\alpha f$ of $N$ variables. Furthermore, if $f$ satisfies \eqref{s0}  then $J_\alpha f$ is finite. Let us also note that 
\begin{equation}\label{liftr}
J_\alpha f(x',0)=I_\alpha^{N-1} f(x') \quad\mbox{ for all }x'\in \R^{N-1},
\end{equation}
which explains the $\alpha$-lifting appellation of our mapping $J_\alpha$. The main property of the $\alpha$-lifting operator $J_\alpha$ is stated in the proposition below.
\begin{proposition}
\label{lift2}
Let $\alpha\in (0, N-1)$ and $q>1$ be such that 
\begin{equation}\label{qq}
\alpha-1<\frac{N-1}{q}<\alpha.
\end{equation} 
If $f\in L^q(\R^{N-1})$ satisfies $J_\alpha f<\infty$ in $\R^N$,  then
\begin{equation}\label{lift0}
|J_\alpha f(x)-J_\alpha f(z)|\leq C|x-z|^{\alpha-\frac{N-1}{q}}\|f\|_{L^q(\R^{N-1})}\quad\mbox{ for all }x,z\in \R^N.
\end{equation}
In particular,  $J_\alpha f\in C^{0, \alpha-\frac{N-1}{q} }(\R^N)$.
\end{proposition}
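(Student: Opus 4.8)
The estimate \eqref{lift0} is a H\"older-type bound for the potential $J_\alpha f$, and the natural route is to split the difference $J_\alpha f(x)-J_\alpha f(z)$ into a near-field part and a far-field part relative to the scale $r:=|x-z|$. Write $J_\alpha f(x)-J_\alpha f(z)=\int_{\R^{N-1}}f(y')\bigl(|x-(y',0)|^{-(N-1-\alpha)}-|z-(y',0)|^{-(N-1-\alpha)}\bigr)\,dy'$ and decompose $\R^{N-1}$ into the set $A$ where the projection of $y'$ is within, say, $2r$ of the projections of $x$ and $z$, and its complement $A^c$. On $A$ I would simply bound the integrand by the sum of the two kernels, apply H\"older's inequality with exponents $q$ and $q'=q/(q-1)$, and use that $\int_{|y'-x'|\le Cr}|x-(y',0)|^{-(N-1-\alpha)q'}\,dy'$ is finite precisely when $(N-1-\alpha)q'<N-1$, i.e. when $\frac{N-1}{q}<\alpha$ — one of the two inequalities in \eqref{qq} — and this integral scales like $r^{(N-1)-(N-1-\alpha)q'}$, giving after taking the $1/q'$ power exactly the factor $r^{\alpha-\frac{N-1}{q}}$.

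For the far-field part over $A^c$, the plan is to exploit cancellation: by the mean value theorem applied to $t\mapsto |t|^{-(N-1-\alpha)}$ along the segment from $x$ to $z$, the difference of kernels is bounded by $C\,r\,|w-(y',0)|^{-(N-\alpha)}$ for some $w$ on that segment, and since $|y'|$ is far from the projections of $x,z$ we have $|w-(y',0)|\gtrsim |x-(y',0)|$ and also $|w-(y',0)|\gtrsim r$. Again apply H\"older; the relevant integral $\int_{A^c}|w-(y',0)|^{-(N-\alpha)q'}\,dy'$ converges at infinity automatically (the exponent $(N-\alpha)q'$ exceeds $N-1$) and near the boundary of $A^c$ behaves like $r^{(N-1)-(N-\alpha)q'}$; combined with the prefactor $r$ this yields $r\cdot r^{\frac{N-1}{q'}-(N-\alpha)}=r^{1-N+\alpha+\frac{N-1}{q'}}=r^{\alpha-\frac{N-1}{q}}$ after simplifying with $\frac{N-1}{q'}=N-1-\frac{N-1}{q}$. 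Convergence at infinity of this second integral is where the lower bound $\alpha-1<\frac{N-1}{q}$ enters: it is equivalent to $(N-\alpha)q'>N-1$, so \emph{both} constraints in \eqref{qq} are used, one for each region. Assembling the two estimates gives \eqref{lift0}, and continuity together with the stated H\"older class follows immediately since the right-hand side tends to $0$ as $|x-z|\to 0$.

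\textbf{Main obstacle.} The routine calculations (H\"older, polar coordinates, power counting) are straightforward; the delicate point is organizing the geometry of the splitting so that the mean value theorem can be applied with a single point $w$ on the segment $[x,z]$ whose distance to $(y',0)$ is comparable to both $|x-(y',0)|$ and to $r$ throughout $A^c$ — this requires choosing the cutoff radius (here $2r$, though any fixed multiple $>1$ works) so that the segment stays uniformly away from the excluded ball, and then verifying the two-sided comparability by the triangle inequality. A secondary technical care is that $x$ and $z$ need not lie on $\partial\R^N_+$, so the kernels involve the full $N$-dimensional distance $|x-(y',0)|$ rather than $|x'-y'|$; this only helps (the extra $x_N^2$ term makes kernels smaller) but one must make sure the lower bounds $|x-(y',0)|\gtrsim |x'-y'|$ used in the far field are stated for the $N$-dimensional quantity. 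Finally, one should note explicitly that the hypothesis $J_\alpha f<\infty$ is what guarantees the integrals defining $J_\alpha f(x)$ and $J_\alpha f(z)$ are separately finite so that the subtraction and the dominated-convergence-style manipulations are legitimate.
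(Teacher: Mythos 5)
Your proposal is correct and follows essentially the same route as the paper: decompose at the scale $R=|x-z|$, bound the two near-field contributions separately via H\"older's inequality (which is where $\frac{N-1}{q}<\alpha$ enters), and bound the far-field difference by the mean value theorem along the segment $[x,z]$ followed by H\"older (which is where $\alpha-1<\frac{N-1}{q}$ enters), exactly as in the paper's estimates for $V_1$ and $V_2$. The only cosmetic difference is that you split according to the projected distance $|x'-y'|$ rather than the full distance $|x-(y',0)|$, which spares you the paper's two-case discussion of $|x_N|\geq 2R$ versus $|x_N|<2R$ in the far field but changes nothing essential.
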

\begin{proof}
Let $x,z\in \R^N$, $x\neq z$ and denote $R=|x-z|>0$. Then, for all $\zeta\in \R^N$ we have
$$
J_\alpha f(\zeta)=V_1(\zeta)+V_2(\zeta),
$$
where
$$
\begin{aligned}
V_1(\zeta)&=\intl_{|x-(y',0)|<2R}\frac{f(y') }{|\zeta-(y',0)|^{N-1-\alpha}} dy',\\[0.3cm]
V_2(\zeta)&=\intl_{|x-(y',0)|\geq 2R}\frac{f(y')}{|\zeta-(y',0)|^{N-1-\alpha}} dy'.
\end{aligned}
$$
Then,
\begin{equation}\label{lift3}
|J_\alpha f(x)-J_\alpha f(z)|\leq |V_1(x)|+|V_1(z)|+ |V_2(x)-V_2(z)|.
\end{equation}
By H\"older's inequality we estimate
\begin{equation}\label{lift4}
\begin{aligned}
|V_1(x)|&\leq \Big( \intl_{|x-(y',0)|<2R} |x-(y',0)|^{-q'(N-1-\alpha)} dy'\Big)^{1/q'}\|f\|_{L^{q}(\R^{N-1})}\\[0.3cm]
&\leq \Big( \intl_{|x'-y'|<2R} |x'-y'|^{-q'(N-1-\alpha)}dy'\Big)^{1/q'}\|f\|_{L^{q}(\R^{N-1})}\\[0.3cm]
&=C \Big( \intl_0^{2R} t^{N-2-q'(N-1-\alpha)} dt\Big)^{1/q'}\|f\|_{L^{q}(\R^{N-1})}\\[0.2cm]
&\leq C R^{\frac{N-1}{q'}-(N-1-\alpha)} \|f\|_{L^{q}(\R^{N-1})}\\
&\leq C R^{\alpha-\frac{N-1}{q} } \|f\|_{L^{q}(\R^{N-1})}.
\end{aligned}
\end{equation}
Further, we have
$$
|V_1(z)|\leq \Big( \intl_{|x-(y',0)|<2R} |z-(y',0)|^{-q'(N-1-\alpha)}dy' \Big)^{1/q'}\|f\|_{L^{q}(\R^{N-1})}.
$$
We next observe that
$$
|x-z|=R\mbox{ and } |x-(y',0)|<2R\Longrightarrow |z-(y',0)|\leq |x-z|+|x-(y',0)|<3R
$$
and  as before we find
\begin{equation}\label{lift5}
\begin{aligned}
|V_1(z)|&\leq \Big( \intl_{|z-(y',0)|<3R} |z-(y',0)|^{-q'(N-1-\alpha)} dy'\Big)^{1/q'}\|f\|_{L^{q}(\R^{N-1})}\\[0.3cm]
&\leq \Big( \intl_{|z'-y'|<3R} |z'-y'|^{-q'(N-1-\alpha)} dy' \Big)^{1/q'}\|f\|_{L^{q}(\R^{N-1})}\\[0.3cm]
&\leq C R^{\alpha-\frac{N-1}{q}} \|f\|_{L^{q}(\R^{N-1})}.
\end{aligned}
\end{equation}
To estimate the last term in \eqref{lift3} we use the Mean Value Theorem for $h(\zeta)=|\zeta-(y',0)|^{\alpha-N+1}$, $\zeta\in \R^N$, to deduce
$$
\begin{aligned}
\Big||x-(y',0)|^{\alpha-N+1}-|z-(y',0)|^{\alpha-N+1}\Big|&=|h(x)-h(z)|\\[0.2cm]
&=|x-z||\nabla h(tx+(1-t)z)|\\[0.2cm]
&= (N-1-\alpha)R|tx+(1-t)z-(y',0)|^{\alpha-N},
\end{aligned}
$$
for some $0<t<1$. Thus, if $|x-z|=R$ and $|x-(y',0)|\geq 2R$ we find
$$
|tx+(1-t)z-(y',0)|\geq |x-(y',0)|-(1-t)|x-z|\geq \frac{1}{2}|x-(y',0)|.
$$
Combining the last two estimates we find
$$
\Big||x-(y',0)|^{\alpha-N+1}-|z-(y',0)|^{\alpha-N+1}\Big|\leq CR |x-(y',0)|^{\alpha-N},
$$
and by H\"older's inequality we derive
$$
\begin{aligned}
|V_2(x)-V_2(z)|&\leq CR \intl_{|x-(y',0)|\geq 2R} |x-(y',0)|^{\alpha-N}|f(y')|dy'\\[0.3cm]
&\leq CR \Big( \intl_{|x-(y',0)|\geq 2R} |(x-(y',0)|^{-q'(N-\alpha)} dy'\Big)^{1/q'}\|f\|_{L^{q}(\R^{N-1})}.
\end{aligned}
$$
Since
$$
\begin{aligned}
|x-(y',0)|&=\big(|x'-y'|^2+x_N^2\big)^{1/2}\geq \frac{1}{\sqrt{2}}\big(|x'-y'|+|x_N|\big),
\end{aligned}
$$
we obtain from the above estimate that
\begin{equation}\label{lift6}
|V_2(x)-V_2(z)|\leq CR \Big( \intl_{|x-(y',0)|\geq 2R} \big(|x'-y'|+|x_N|\big)^{-q'(N-\alpha)}dy'\Big)^{1/q'}\|f\|_{L^{q}(\R^{N-1})}.
\end{equation}

\noindent{\bf Case 1:} $|x_N|\geq 2R$. Then, \eqref{lift6} yields 
$$
\begin{aligned}
|V_2(x)-V_2(z)|&\leq CR \Big( \intl_{\R^{N-1}} \big(|x'-y'|+|x_N|\big)^{-q'(N-\alpha)} dy'\Big)^{1/q'}\|f\|_{L^{q}(\R^{N-1})}\\[0.3cm]
&\leq CR \Big( \intl_0^\infty  \big(t+|x_N|\big)^{N-2-q'(N-\alpha)} dt\Big)^{1/q'}\|f\|_{L^{q}(\R^{N-1})}\\[0.3cm]
&\leq CR  |x_N|^{\frac{N-1}{q'}-(N-\alpha)}\|f\|_{L^{q}(\R^{N-1})}\\[0.3cm]
&\leq CR^{\alpha-\frac{N-1}{q}}\|f\|_{L^{q}(\R^{N-1})}.
\end{aligned}
$$
\noindent{\bf Case 2:} $|x_N|< 2R$. Note that 
$$
|x-(y',0)|\geq 2R\Longrightarrow |x'-y'|+|x_N|\geq |x-(y',0)|\geq 2R \Longrightarrow |x'-y'|\geq 2R-|x_N|.
$$
Thus, from \eqref{lift6} we obtain
$$
\begin{aligned}
|V_2(x)-V_2(z)|&\leq CR \Big( \intl_{|x'-y'|\geq 2R-|x_N|} \big(|x'-y'|+|x_N|\big)^{-q'(N-\alpha)} dy'\Big)^{1/q'}\|f\|_{L^{q}(\R^{N-1})}\\[0.3cm]
&\leq CR \Big( \intl_{2R-|x_N|}^\infty  \big(t+|x_N|\big)^{N-2-q'(N-\alpha)} dy' \Big)^{1/q'}\|f\|_{L^{q}(\R^{N-1})}\\[0.3cm]
&= CR  (2R)^{\frac{N-1}{q'}-(N-\alpha)}\|f\|_{L^{q}(\R^{N-1})}\\[0.3cm]
&= CR^{\alpha-\frac{N-1}{q} }\|f\|_{L^{q}(\R^{N-1})}.
\end{aligned}
$$
Hence, in both the above cases we have 
$$
|V_2(x)-V_2(z)|\leq CR^{\alpha-\frac{N-1}{q}}\|f\|_{L^{q}(\R^{N-1})}.
$$
Finally, we use the above estimate together with \eqref{lift5} and \eqref{lift4} into \eqref{lift3} to reach the conclusion.
\end{proof}

Using \eqref{liftr} we derive from Proposition \ref{lift2} that 
\begin{remark}\label{remm}
Suppose $q>1$ satisfies \eqref{qq} and let $g\in L^q(\R^{N-1})$ be such that $J_\alpha g<\infty$ in $\R^N$. Then 
$I_\alpha^{N-1}g\in C(\R^N)$.
\end{remark}
Combining the estimates \eqref{HLS1} and \eqref{lift0} we also have:

\begin{corollary}\label{corlift}
Let $\alpha\in (0, N-1)$ and $s>1$ be such that
$$
\frac{N-1}{\alpha+1}<s<\frac{N-1}{\alpha}.
$$
Let $q>1$ be given by \eqref{HLS2} and $g\in L^s(\R^{N-1})$ be such that $J_1 (I_\alpha^{N-1}g)<\infty$ in $\R^N$. 
Then, there exists a constant $C(N,s,\alpha)>0$ such that 
$$
\sup_{\substack{x,z\in \R^N\\ x\neq z}}\frac{|J_1 (I_\alpha^{N-1}g)(x)-J_1(I_\alpha^{N-1}g)(z)|}{|x-z|^{1-\frac{N-1}{q}}}\leq C(N,s,\alpha)\|g\|_{L^s(\R^{N-1})}.
$$
\end{corollary}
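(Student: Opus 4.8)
The plan is to deduce this directly from Proposition~\ref{lift2} applied with the parameter $1$ in place of $\alpha$ (that is, to the $1$-lifting operator $J_1$), taking as input the function $f:=I_\alpha^{N-1}g$, and then converting the bound $\|f\|_{L^q(\R^{N-1})}$ into a bound by $\|g\|_{L^s(\R^{N-1})}$ via the Sobolev inequality \eqref{HLS1}.

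First I would check that $f=I_\alpha^{N-1}g$ lies in $L^q(\R^{N-1})$. Since by hypothesis $1<s<\frac{N-1}{\alpha}$ and $q=\frac{(N-1)s}{N-1-\alpha s}$ is exactly the exponent appearing in \eqref{HLS2}, the Hardy--Littlewood--Sobolev inequality \eqref{HLS1} gives $f\in L^q(\R^{N-1})$ together with the quantitative estimate $\|f\|_{L^q(\R^{N-1})}\leq C(N,s,\alpha)\|g\|_{L^s(\R^{N-1})}$.

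Next I would verify that $q$ satisfies the admissibility condition \eqref{qq} of Proposition~\ref{lift2} with parameter $1$, namely $0<\frac{N-1}{q}<1$, i.e. $q>N-1$. The inequality $\frac{N-1}{q}>0$ is trivial, while $q>N-1$ is equivalent, using $N-1-\alpha s>0$, to $s>N-1-\alpha s$, that is $s>\frac{N-1}{\alpha+1}$; this is precisely the lower bound on $s$ assumed in the statement. Hence Proposition~\ref{lift2} applies to $f$ with parameter $1$, given the standing hypothesis $J_1 f = J_1(I_\alpha^{N-1}g)<\infty$ in $\R^N$.

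It then remains only to combine the two estimates: Proposition~\ref{lift2} yields, for all $x\neq z$ in $\R^N$,
\[
|J_1 f(x)-J_1 f(z)|\leq C\,|x-z|^{\,1-\frac{N-1}{q}}\,\|f\|_{L^q(\R^{N-1})}\leq C(N,s,\alpha)\,|x-z|^{\,1-\frac{N-1}{q}}\,\|g\|_{L^s(\R^{N-1})},
\]
and dividing by $|x-z|^{\,1-\frac{N-1}{q}}$ and taking the supremum over $x\neq z$ gives the claim. There is essentially no obstacle here: the only point requiring care is the elementary computation showing that the lower bound $s>\frac{N-1}{\alpha+1}$ is exactly what forces $q>N-1$, so that the Hölder exponent $1-\frac{N-1}{q}$ produced by Proposition~\ref{lift2} lies in $(0,1)$ and the proposition is indeed applicable.
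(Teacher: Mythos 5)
Your proposal is correct and follows essentially the same route as the paper: apply Proposition \ref{lift2} with parameter $1$ to $f=I_\alpha^{N-1}g$ (noting that the lower bound $s>\frac{N-1}{\alpha+1}$ is exactly what gives $0<\frac{N-1}{q}<1$, so \eqref{qq} holds), and then bound $\|f\|_{L^q(\R^{N-1})}$ by $\|g\|_{L^s(\R^{N-1})}$ via \eqref{HLS1}. The paper's own proof is precisely this two-step combination of \eqref{lift0} and \eqref{HLS1}, so there is nothing to add.
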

\begin{proof} Let $f=I_\alpha^{N-1}g$.  Then, by our assumption on $s$ we have $0<\frac{N-1}{q}<1$. By \eqref{lift0}  and by \eqref{HLS1}, we find
$$
\sup_{\substack{x,z\in \R^N\\ x\neq z}}\frac{|J_1 f(x)-J_1f(z)|}{|x-z|^{1-\frac{N-1}{q}}}\leq c\|f\|_{L^q(\R^{N-1})}= c\|I_\alpha^{N-1}g\|_{L^q(\R^{N-1})}
\leq C\|g\|_{L^s(\R^{N-1})},
$$
where $c,C>0$ depend only on $N$,  $s$ and $\alpha$.
\end{proof}

\subsection{Some useful integral estimates}
In this section we derive two integral estimates, the first of which extends the estimates in \cite{G22, G24, GKS20}.
\begin{lemma}\label{lint1}
Let $1<k<N-1<\beta$. Then, there exists $C_1=C_1(N,k, \beta)>0$ such that
\begin{equation}\label{stan1}
\intl_{\R^{N-1}}\frac{dz'}{|y'-z'|^{k} (1+|z'|)^\beta }\leq C_1(1+|y'|)^{-k}\quad\mbox{ for all }y'\in \R^{N-1}.
\end{equation}

\end{lemma}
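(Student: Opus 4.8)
The plan is to estimate the integral $\int_{\R^{N-1}} |y'-z'|^{-k}(1+|z'|)^{-\beta}\,dz'$ by splitting the domain of integration into three regions determined by the position of $z'$ relative to $y'$ and the origin. Fix $y'\in\R^{N-1}$ and write $r=|y'|$. The three natural regions are: (a) $|z'-y'|<r/2$, where $|z'|$ is comparable to $r$, so that $(1+|z'|)^{-\beta}\approx (1+r)^{-\beta}$; (b) $|z'|<r/2$, where $|y'-z'|$ is comparable to $r$, so that $|y'-z'|^{-k}\approx r^{-k}$; and (c) the remaining region where $|z'-y'|\geq r/2$ and $|z'|\geq r/2$, so that $|z'|$ and $|y'-z'|$ are both comparable to $|z'|$ for large $|z'|$. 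When $r\leq 1$ the bound $(1+|y'|)^{-k}\approx 1$ is trivial to match, so one may assume $r$ large.

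In region (a), pull out $(1+r)^{-\beta}$ and bound $\int_{|z'-y'|<r/2}|y'-z'|^{-k}\,dz' = C\int_0^{r/2} t^{N-2-k}\,dt$, which converges at $t=0$ since $k<N-1$ and equals $C r^{N-1-k}$; hence this piece is $\lesssim r^{N-1-k}(1+r)^{-\beta}\lesssim (1+r)^{-k}$ because $\beta>N-1$. In region (b), pull out $r^{-k}$ (valid since $|y'-z'|\geq r/2$ there) and bound $\int_{|z'|<r/2}(1+|z'|)^{-\beta}\,dz'\leq \int_{\R^{N-1}}(1+|z'|)^{-\beta}\,dz'<\infty$ because $\beta>N-1$; this gives a contribution $\lesssim r^{-k}\lesssim (1+r)^{-k}$. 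In region (c), use $|y'-z'|^{-k}\lesssim |z'|^{-k}$ (a consequence of $|y'-z'|\geq |z'|-r\geq |z'|/2$ when $|z'|\geq 2r$, together with the already-handled overlap $r/2\leq|z'|<2r$ where $|z'|\approx r$ and one controls things by $\int$ near $z'=y'$ again or simply absorbs into regions (a)/(b)), so the integral is $\lesssim \int_{|z'|\geq r/2}|z'|^{-k}(1+|z'|)^{-\beta}\,dz' = C\int_{r/2}^\infty t^{N-2-k-\beta}\,dt \approx r^{N-1-k-\beta}\lesssim (1+r)^{-k}$ since the exponent $N-1-k-\beta<-k$.

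The only mild subtlety — and the step I would be most careful about — is the bookkeeping on the overlaps of the three regions and the intermediate annulus $r/2\leq |z'|\leq 2r$ where neither $|z'|$ nor $|y'-z'|$ is automatically bounded below; there one still has $|z'|\approx r$, and the singularity of $|y'-z'|^{-k}$ is integrable since $k<N-1$, so this annular piece contributes $\lesssim r^{-\beta}\int_{|z'-y'|\lesssim r}|y'-z'|^{-k}\,dz' \lesssim r^{-\beta} r^{N-1-k}\lesssim (1+r)^{-k}$, exactly as in region (a). Collecting the four (overlapping but finitely many) contributions and choosing $C_1$ to be the sum of the implied constants — each depending only on $N$, $k$, $\beta$ — yields \eqref{stan1}. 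All constants are uniform in $y'$ because every estimate depended on $y'$ only through $r=|y'|$ via the scalings above, and the case $r\leq 1$ was disposed of at the outset.
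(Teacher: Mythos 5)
Your proof is correct and follows essentially the same strategy as the paper's: decompose $\R^{N-1}$ into finitely many regions on each of which one of the two factors $|y'-z'|^{-k}$, $(1+|z'|)^{-\beta}$ is comparable to a power of $|y'|$, integrate the remaining factor using $k<N-1$ and $\beta>N-1$, and treat bounded $|y'|$ separately. The only differences are cosmetic: your regions are a ball around $y'$, a ball around the origin, and the complement (with the annulus $|y'|/2\le|z'|\le 2|y'|$ handled as in the paper's middle term), versus the paper's three annuli in $|z'|$, and for $|y'|\le 1$ you dismiss the case as trivial -- which is justified by splitting into $|y'-z'|<1$ and $|y'-z'|\ge 1$ -- whereas the paper instead invokes the continuity of the Riesz potential $I_{N-1-k}^{N-1}$ applied to $(1+|z'|)^{-\beta}$.
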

\begin{proof} Assume first $y'\in \R^{N-1}$, $|y'|>1$.
We decompose 

\begin{equation}\label{stan2}
\intl_{\R^{N-1}}\frac{dz'}{ |y'-z'|^{k} (1+|z'|)^\beta }=D_1+D_2+D_3,
\end{equation}
where 
$$
\begin{aligned}
D_1&=\intl_{|z'|<\frac{|y'|}{2}}\frac{dz'}{ |y'-z'|^{k} (1+|z'|)^\beta },\\[0.3cm]
D_2&=\intl_{\frac{|y'|}{2}\leq |z'|< 2|y'|}\frac{dz'}{|y'-z'|^{k} (1+|z'|)^\beta },\\[0.3cm]
D_3&=\intl_{|z'|\geq 2|y'|}\frac{dz'}{|y'-z'|^{k} (1+|z'|)^\beta }.
\end{aligned}
$$
To estimate $D_1$ we observe that $|z'|<\frac{|y'|}{2}$ yields
$\ds 
|y'-z'|\geq |y'|-|z'|> \frac{|y'|}{2}
$
and thus
$$
\begin{aligned}
D_1&\leq 2^{k} |y'|^{-k} \intl_{|z'|<\frac{|y'|}{2}}\frac{dz'}{(1+|z'|)^\beta }=C|y'|^{-k}\intl_0^{\frac{|y'|}{2}}\frac{t^{N-2}}{(1+t)^\beta } dt\\[0.3cm]
&\leq C|y'|^{-k}\intl_0^{\frac{|y'|}{2}} (1+t)^{N-2-\beta } dt\leq C|y'|^{-k} \quad\mbox{ since }\beta>N-1.
\end{aligned}
$$
Next, using the elementary inequality
\begin{equation}\label{stan3}
a\geq \frac{1+a}{2}\quad\mbox{ for all }a\in \R,\,  a>1,
\end{equation}
we find 
\begin{equation}\label{stan4}
D_1\leq C(1+|y'|)^{-k}.
\end{equation}
To estimate $D_2$ we notice that
$$
\frac{|y'|}{2}\leq |z'|<2|y'|\Longrightarrow 
\begin{cases}
|y'-z'|\leq |y'|+|z'|< 3|y'|\\[0.2cm]
1+|z'|\geq \frac{1+|y'|}{2}
\end{cases},
$$
so
\begin{equation}\label{stan5}
\begin{aligned}
D_2&\leq C(1+|y'|)^{-\beta} \intl_{|y'-z'|<3|y'|} \frac{dz'}{|y'-z'|^{k}}\\[0.3cm]
&= C(1+|y'|)^{-\beta} \intl_0^{3|y'|} t^{N-2-k} dt \\[0.3cm]
&\leq C(1+|y'|)^{-\beta} |y'|^{N-1-k} \leq C(1+|y'|)^{N-1-\beta-k}\\[0.2cm]
&\leq C(1+|y'|)^{-k}.
\end{aligned}
\end{equation}
Finally, to estimate $D_3$ we have
$|z'|\geq 2|y'|\Longrightarrow |y'-z'|\geq |y'|$
so that
\begin{equation}\label{stan5a}
\begin{aligned}
D_3&\leq |y'|^{-k} \intl_{|z'|\geq 2|y'|}\frac{dz'}{(1+|z'|)^\beta}\leq C |y'|^{-k} \intl_{2|y'|}^\infty (1+t)^{N-2-\beta} dt\\[0.3cm]
&\leq C\Big(\frac{1+|y'|}{2}\Big)^{-k} (1+2|y'|)^{N-1-\beta}
\leq C (1+|y'|)^{-k}.
\end{aligned}
\end{equation}
We now use \eqref{stan4}-\eqref{stan5a} into \eqref{stan2} to deduce \eqref{stan1} for $|y'|>1$. 

Assume now $|y'|\leq 1$ and let $g(z')=(1+|z'|)^{-\beta}$. Since $\beta>N-1$ we have $g\in L^q(\R^{N-1})$ for all $q\geq 1$. Thus, for 
$$
\alpha=N-1-k\in (0, N-1) \quad \mbox{ and }\quad \alpha-1<\frac{N-1}{q}<\alpha,
$$ 
we have that $g$ satisfies \eqref{s0}. By Remark \ref{remm} it follows that $I_\alpha^{N-1}g$ is continuous and positive and thus bounded on the compact set $\overline B_1'\subset \R^{N-1}$. Hence, there exists a positive constant $c>0$ such that 
$$
\intl_{\R^{N-1}}\frac{dz'}{ |y'-z'|^{k} (1+|z'|)^\beta }=(I^{N-1}_\alpha g)(y')\leq c(1+|y'|)^{-k}\quad\mbox{ for all }y'\in \R^{N-1}, |y'|\leq 1.
$$
This concludes our proof.
\end{proof}

\begin{lemma}\label{lint2}
Let $1<k<N-1$. Then, there exists $C_2=C_2(N,k)>0$ such that
\begin{equation}\label{stan6}
\intl_{\R^{N-1}}\frac{dy'}{ |x-(y',0)|^{N-2} (1+|y'|)^{k} }\leq C_2(1+|x|)^{1-k}\quad\mbox{ for all }x\in \R^{N}.
\end{equation}
\end{lemma}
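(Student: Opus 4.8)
\textbf{Proof plan for Lemma~\ref{lint2}.}

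The plan is to mimic the dyadic-decomposition strategy from the proof of Lemma~\ref{lint1}, now working in $\R^N$ with the kernel $|x-(y',0)|^{-(N-2)}$ in place of $|y'-z'|^{-k}$, and with the weight $(1+|y'|)^{-k}$ playing the role of the polynomial decay. First I would dispose of the bounded regime $|x|\le 1$ by the same device used at the end of the previous proof: the function $g(y')=(1+|y'|)^{-k}$ lies in $L^q(\R^{N-1})$ for every $q>\frac{N-1}{k}$ since $k>1$ makes such $q$ admissible together with $k<N-1$, and choosing $\alpha=1$ one checks $\alpha-1<\frac{N-1}{q}<\alpha$ is compatible with $q>N-1$; then $J_1 g = J_1 g$ is finite and, by Proposition~\ref{lift2}, continuous on $\R^N$, hence bounded on the compact ball $\overline{B_1}$, which gives \eqref{stan6} there with a suitable constant.

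For $|x|>1$ I would split the integral into the three regions $|y'|<\tfrac{|x|}{2}$, $\tfrac{|x|}{2}\le|y'|<2|x|$, and $|y'|\ge 2|x|$, call the pieces $E_1,E_2,E_3$. On $E_1$ one has $|x-(y',0)|\ge |x|-|y'|>\tfrac{|x|}{2}$ (using $|(y',0)|=|y'|$), so $E_1\le C|x|^{-(N-2)}\int_{|y'|<|x|/2}(1+|y'|)^{-k}\,dy'$; since $k<N-1$ the radial integral $\int_0^{|x|/2} t^{N-2}(1+t)^{-k}\,dt$ grows like $|x|^{N-1-k}$, giving $E_1\le C|x|^{1-k}\le C(1+|x|)^{1-k}$ after the elementary inequality \eqref{stan3}. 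On $E_3$ one has $|x-(y',0)|\ge|y'|-|x|\ge\tfrac{|y'|}{2}$ and $1+|y'|\ge 1+|x|$, so $E_3\le C\int_{|y'|\ge 2|x|}|y'|^{-(N-2)}(1+|y'|)^{-k}\,dy'\le C\int_{2|x|}^\infty t^{-k}\,dt\cdot(\text{const})$, and since the relevant exponent is $N-2-(N-2)-k = -k<-1$... more carefully $\int_{2|x|}^\infty t^{N-2}t^{-(N-2)}(1+t)^{-k}\,dt=\int_{2|x|}^\infty t^{N-2-(N-2)}(1+t)^{-k}$ — wait, I must keep the surface measure $t^{N-2}dt$ and the kernel $t^{-(N-2)}$ from $|y'|^{-(N-2)}$, which cancel, leaving $\int_{2|x|}^\infty(1+t)^{-k}\,dt\asymp|x|^{1-k}$ because $k>1$; hence $E_3\le C(1+|x|)^{1-k}$.

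The genuinely new and delicate term is the middle one, $E_2$, where $|y'|\asymp|x|$ and the singularity of the Newtonian kernel at $y'=x'$ is active. Here I would use $1+|y'|\ge\tfrac{1+|x|}{2}$ to pull out the weight, reducing to estimating $(1+|x|)^{-k}\int_{|y'|<3|x|}|x-(y',0)|^{-(N-2)}\,dy'$; the inner integral is over an $(N-1)$-dimensional ball of the variable $y'$ of a kernel with exponent $N-2$, which is locally integrable in dimension $N-1$ since $N-2<N-1$, and splitting into $|x'-y'|\le|x_N|$ and $|x'-y'|>|x_N|$ (or simply bounding $|x-(y',0)|\ge|x'-y'|$) yields $\int_{|x'-y'|<3|x|}|x'-y'|^{-(N-2)}\,dy'\le C\int_0^{3|x|}t^{N-2-(N-2)}\,dt=C|x|$, so $E_2\le C(1+|x|)^{-k}\cdot|x|\le C(1+|x|)^{1-k}$. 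The main obstacle I anticipate is precisely this $E_2$ estimate: one must be careful that the bound is uniform in the transversal coordinate $x_N$ — that is, the worst case is $x_N=0$, and the estimate $|x-(y',0)|\ge|x'-y'|$ is exactly what makes the argument insensitive to $x_N$, so the uniformity over all $x\in\R^N$ with fixed $|x|$ is automatic. Summing $E_1+E_2+E_3$ over the three regions gives \eqref{stan6} for $|x|>1$, and together with the $|x|\le1$ case this completes the proof.
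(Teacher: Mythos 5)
Your proposal is correct and follows essentially the same route as the paper: the same three-region decomposition $E_1,E_2,E_3$ for $|x|>1$ with the same kernel bounds (in particular the bound $|x-(y',0)|\geq |x'-y'|$ for the middle region, exactly as in the paper), and a continuity/compactness argument for $|x|\leq 1$. The only cosmetic difference is that for $|x|\leq 1$ you invoke the continuity of $J_1 g$ on $\overline B_1\subset \R^N$ directly from Proposition \ref{lift2}, while the paper first majorizes by $I_1^{N-1}g(x')$ and uses Remark \ref{remm}; these are the same estimate in substance.
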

\begin{proof} Let $x=(x', x_N)\in \R^N$. Assume first $|x|>1$ and decompose
\begin{equation}\label{stan7}
\intl_{\R^{N-1}}\frac{dy'}{|x-(y',0)|^{N-2} (1+|y'|)^{k} }=E_1+E_2+E_3,
\end{equation}
where
$$
\begin{aligned}
E_1&=\intl_{|y'|<\frac{|x|}{2}}\frac{dy'}{ |x-(y',0)|^{N-2} (1+|y'|)^{k}},\\[0.3cm]
E_2&=\intl_{\frac{|x|}{2}\leq |y'|< 2|x|}\frac{dy'}{ |x-(y',0)|^{N-2} (1+|y'|)^{k} },\\[0.3cm]
E_3&=\intl_{|y'|\geq 2|x|} \frac{dy'}{ |x-(y',0)|^{N-2} (1+|y'|)^{k}}.
\end{aligned}
$$
Let us notice first that 
$$
|y'|<\frac{|x|}{2}\Longrightarrow |x-(y',0)|\geq |x|-|y'|>\frac{|x|}{2},
$$
so that 
\begin{equation}\label{stan8}
\begin{aligned}
E_1&\leq  \Big(\frac{|x|}{2}\Big)^{2-N}  \intl_{|y'|<\frac{|x|}{2}}\frac{dy'}{(1+|y'|)^{k}}
\leq C|x|^{2-N}\intl_0^{\frac{|x|}{2}} (1+t)^{N-2-k} dt\\[0.3cm]
&\leq C|x|^{2-N}(1+|x|)^{N-1-k}\\[0.3cm]
&\leq C\Big(\frac{1+|x|}{2}\Big)^{2-N} (1+|x|)^{N-1-k} \quad\mbox{ by \eqref{stan3} with $a=|x|>1$}\\
&=C(1+|x|)^{1-k}.
\end{aligned}
\end{equation}
Next, we use the estimate
$$
|y'|<2|x| \Longrightarrow |x-(y',0)|\leq |x|+|y'|< 3|x|,
$$
and thus $|x'-y'|\leq |x-(y',0)|<3|x|$. Hence,
\begin{equation}\label{stan9}
\begin{aligned}
E_2&\leq  \Big(1+\frac{|x|}{2}\Big)^{-k}\intl_{\frac{|x|}{2}
\leq |y'|<2|x|}\frac{dy'}{|x-(y',0)|^{N-2}}\\[0.3cm]
&\leq C(1+|x|)^{-k}\intl_{|x'-y'|<3|x|}
\frac{dy'}{|x'-y'|^{N-2}}\\[0.3cm]
&= C(1+|x|)^{-k}\intl_0^{3|x|} 1 dt= C|x|(1+|x|)^{-k}\leq C(1+|x|)^{1-k}. 
\end{aligned}
\end{equation}
Finally, to estimate $E_3$ we use 
$$
|y'|\geq 2|x|\Longrightarrow |x-(y',0)|\geq |y'|-|x|\geq \frac{|y'|}{2},
$$
so that
\begin{equation}\label{stan10}
\begin{aligned}
E_3&\leq  2^{N-2}\intl_{|y'|\geq 2|x|}\frac{dy'}{(1+|y'|)^k |y'|^{N-2}}\\[0.3cm]
&\leq C\intl_{2|x|}^\infty \frac{dt}{(1+t)^{k}} \leq C(1+|x|)^{1-k}. 
\end{aligned}
\end{equation}
Using \eqref{stan8}-\eqref{stan10} into \eqref{stan7}, we deduce \eqref{stan6} for all $x\in \R^N$, $|x|>1$. 

Assume now $|x|\leq 1$. Then $|x'|\leq 1$ and 
\begin{equation}\label{stan11}
\begin{aligned}
\intl_{\R^{N-1}}\frac{dy'}{|x-(y',0)|^{N-2} (1+|y'|)^{k} }& \leq 
\intl_{\R^{N-1}}\frac{dy'}{|x'-y'|^{N-2} (1+|y'|)^{k} }\\[0.3cm]
&=(I_{1}^{N-1}g)(x'),
\end{aligned}
\end{equation}
where $I_1^{N-1}$ is the Riesz potential of order $\alpha=1$ in $\R^{N-1}$ as defined in \eqref{rff} and $g(x')=(1+|x'|)^{-k}$. Notice that $g$ satisfies \eqref{s0} with $\alpha=1$, so $J_1 g<\infty$ in $\R^N$. Since 
$$
g\in L^q(\R^{N-1}) \quad\mbox{ for all }\quad  q>N-1,
$$ 
by Remark \ref{remm} we deduce that $I_{1}^{N-1}g$ is continuous and positive in $\R^{N-1}$. Thus, $I_{1}^{N-1}g$ and $(1+|x'|)^{1-k}$ are both continuous and positive on the closed unit ball $\overline{B}_1'\subset \R^{N-1}$ and hence there exists $C>0$ such that
$$
\begin{aligned}
I_{1}^{N-1}g(x')
& \leq C(1+|x'|)^{1-k}\leq C\Big(\frac{1+|x'|+|x_N|}{2}\Big)^{1-k}\\[0.2cm]
&\leq C(1+|x|)^{1-k}  \quad \mbox{ for all } x\in \R^N, |x|\leq 1. 
\end{aligned}
$$
Combining this last estimate with \eqref{stan11} we derive \eqref{stan6} for $|x|\leq 1$ and conclude the proof.
\end{proof}

\section{Proof of Theorem \ref{thm1}}

Let $x'\in \R^{N-1}$ for which \eqref{l1} holds. Then, by Fatou Lemma, \eqref{l20}  and the fact that $\mu\not\equiv 0$ we find

\begin{equation}\label{mmb1}
\begin{aligned}
u(x', 0)&=\lim_{z\in \R^N_+, \, z\to (x',0)} u(z)\\[0.3cm]
&=\lim_{z\in \R^N_+, \, z\to (x',0)}  \Bigg\{\intl_{\R^N_+}G(z, y)d\mu(y) +\lambda \intl_{\R^{N-1}} G(z, (y',0)) H_u(y') dy'  \Bigg\}\\[0.3cm]
&\geq \intl_{\R^N_+} \liminf_{z\to (x', 0)} G(z, y)d\mu(y) +\lambda \intl_{\R^{N-1}} \liminf_{z\to (x', 0)} G(z, (y',0)) H_u(y') dy' \\[0.3cm]
&=\intl_{\R^N_+}G((x', 0), y)d\mu(y)+\lambda \intl_{\R^{N-1}}  G((x', 0), (y',0)) H_u(y') dy' \\[0.3cm]
&=\underbrace{\intl_{\R^N_+}G((x', 0), y)d\mu(y)}_{>0}+ \frac{2\lambda}{(N-2)\sigma_N}   \intl_{\R^{N-1}}  \frac{H_u(y')}{|x'-y'|^{N-2}} dy', \\[0.3cm]
\end{aligned}
\end{equation}
where $H_u$ is the integral operator defined in \eqref{hu}.
We see from \eqref{mmb1} that $u(x',0)>0$ a.e. in $\R^{N-1}$. 

Furthermore, \eqref{mmb1} and Fubini Theorem  yields
\begin{equation}\label{mmb2}
\begin{aligned}
u(x', 0) & \geq  \frac{2\lambda}{(N-2)\sigma_N}  \intl_{\R^{N-1}}  \frac{H_u(y')}{|x'-y'|^{N-2}} dy' \\[0.3cm]
& = C
\intl_{\R^{N-1}} u(z',0)^p \Bigg( \intl_{\R^{N-1}} 
\frac{dy'}{|x'-y'|^{N-2}|y'-z'|^k}\Bigg) dz' \quad\mbox{ a.e. in }\R^{N-1},
\end{aligned}
\end{equation}
where $C=\frac{2\lambda}{(N-2)\sigma_N} >0$. 
\subsection{Proof of Theorem \ref{thm1}(i1)}
Assume $0<k\leq 1$. Then,  for all $x', z'\in \R^{N-1}$ and $|y'|>\max\{|x'|, |z'| \}$ we have $|x'-y'|\leq |x'|+|y'|\leq 2|y'|$ and similarly $|y'-z'|\leq 2|y'|$. Thus, we have
$$
\begin{aligned}
\intl_{\R^{N-1}}  \frac{dy'}{|x'-y'|^{N-2}|y'-z'|^k} &\geq \intl_{|y'|>\max\{|x'|, |z'|  \}} 
\frac{dy'}{|x'-y'|^{N-2}|y'-z'|^k}\\[0.3cm]
&\geq 2^{2-N-k} \intl_{|y'|>\max\{|x'|, |z'| \}} 
\frac{dy'}{|y'|^{N-2+k}}\\[0.3cm]
&=C\intl_{\max\{|x'|, |z'| \}}^\infty t^{-k} dt=\infty.
\end{aligned}
$$
Thus, \eqref{mmb2} yields $u(x', 0)\equiv \infty$ in $\R^{N-1}$ and this contradicts \eqref{l1}. Hence, \eqref{half1} has no positive solutions if $0<k\leq 1$.

\subsection{Proof of Theorem \ref{thm1}(i2)}
Assume $1<k<N-1$ and $0<p<\frac{N-1}{k-1}$. We use the following formula (see \cite[Chapter 5.9]{LL10} or the Appendix in \cite{M23} for an elementary proof):
\begin{equation}\label{selb0}
\intl_{\R^{N-1}}  \frac{dy'}{|x'-y'|^{N-1-a}|y'-z'|^b} =\frac{C(N, a,b)}{|x'-z'|^{b-a}} \quad\mbox{ for all }x', z'\in \R^{N-1},
\end{equation}
for all $0<a<b<N-1$.
In particular, for $a=1$ and $b=k$ we have
\begin{equation}\label{selb}
\intl_{\R^{N-1}}  \frac{dy'}{|x'-y'|^{N-2}|y'-z'|^k} =\frac{C(N, k)}{|x'-z'|^{k-1}} \quad\mbox{ for all }x', z'\in \R^{N-1}.
\end{equation}
Thus, letting $v(x')=u(x',0)$, from the above equality and \eqref{mmb2} we have
\begin{equation}\label{seq0}
v(x')\geq c\intl_{\R^{N-1}}\frac{v(y')^p}{|x'-y'|^{k-1}} dy'\quad\mbox{ a.e.  }x'\in \R^{N-1},
\end{equation}
where $c>0$. Let $|x'|<1$ for which $v(x')<\infty$ (see \eqref{l1}). Then
$$
\infty>v(x')\geq c\intl_{|y'|<1}\frac{v(y')^p}{|x'-y'|^{k-1}} dy' \geq 2^{1-k} c\intl_{|y'|<1}v(y')^p dy',
$$
so that
\begin{equation}
\label{vs}
v^p\in L^1(B_1').
\end{equation}
\begin{lemma}\label{kk}
There exists $C>0$ such that
\begin{equation}\label{kk1}
v(x')\geq C |x'|^{1-k}\quad\mbox{ for a.e. }x'\in \R^{N-1}, |x'|> 1.
\end{equation}
\end{lemma}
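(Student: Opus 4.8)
The plan is to exploit the self-improving structure of the integral inequality \eqref{seq0} together with the already-established integrability \eqref{vs}. First I would fix $x' \in \R^{N-1}$ with $|x'| > 1$ for which \eqref{seq0} holds and restrict the integration in \eqref{seq0} to the unit ball: since $|y'| < 1 < |x'|$ forces $|x' - y'| \leq |x'| + |y'| < 2|x'|$, we get
\begin{equation}\label{kkplan1}
v(x') \geq c \intl_{|y'| < 1} \frac{v(y')^p}{|x'-y'|^{k-1}}\, dy' \geq c\, (2|x'|)^{1-k} \intl_{|y'|<1} v(y')^p\, dy'.
\end{equation}
The integral $\intl_{|y'|<1} v(y')^p\, dy'$ is a \emph{finite positive} constant by \eqref{vs} (finite by \eqref{vs}; strictly positive because $v > 0$ a.e., which follows from \eqref{mmb1}). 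Renaming constants, this is exactly \eqref{kk1} with $C = c\, 2^{1-k} \intl_{|y'|<1} v(y')^p\, dy' > 0$.

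The only thing to check carefully is that $\intl_{|y'|<1} v(y')^p\, dy' > 0$, i.e. that $v$ does not vanish identically on $B_1'$. This is immediate: from \eqref{mmb1} we have $u(x',0) \geq \intl_{\R^N_+} G((x',0),y)\, d\mu(y) > 0$ for a.e. $x' \in \R^{N-1}$ because $\mu \not\equiv 0$ and $G > 0$, so $v > 0$ a.e.\ and in particular $v^p$ has positive integral over any set of positive measure. The finiteness is \eqref{vs}, already derived just above the statement.

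There is essentially no obstacle here; the estimate \eqref{kk1} is a one-line consequence of \eqref{seq0} once the domain of integration is cut down to a fixed compact set where the kernel $|x'-y'|^{1-k}$ is comparable to $|x'|^{1-k}$. The genuine work has already been done in deriving \eqref{seq0} (via the Selberg-type identity \eqref{selb}) and \eqref{vs}. I would present \eqref{kkplan1} and then simply note that the remaining bound follows by the elementary estimate $2R \geq 1+R$ for $R>1$ if one prefers the bound phrased with $(1+|x'|)^{1-k}$ instead of $|x'|^{1-k}$, though for $|x'|>1$ the two are comparable and \eqref{kk1} as stated suffices.
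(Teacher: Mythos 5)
Your argument is correct and is essentially identical to the paper's own proof: restrict \eqref{seq0} to the unit ball, use $|x'-y'|<2|x'|$ for $|x'|>1$, and invoke $v>0$ a.e. (from \eqref{mmb1}) together with \eqref{vs} to conclude that $\intl_{|y'|<1}v(y')^p\,dy'$ is a finite positive constant. No differences worth noting.
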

\begin{proof} Let us notice first that from \eqref{mmb1} we have $v>0$ a.e. in $\R^{N-1}$. Hence, from \eqref{seq0} and \eqref{vs}, for all $|x'|>1$ we find
$$
\begin{aligned}
v(x')&\geq c\intl_{|y'|<1}\frac{v(y')^p}{|x'-y'|^{k-1}} dy'\geq c(2|x'|)^{1-k} \intl_{|y'|<1} v(y')^p  dy' \geq C|x'|^{1-k},
\end{aligned}
$$
where $C>0$ is a constant.
\end{proof}
Let $\{\gamma_n\}_{n\geq 0}$ be the sequence defined by $\gamma_0=k-1$ and
\begin{equation}\label{seqn1}
\gamma_{n+1}=p\gamma_n+k-N \quad\mbox{ for all }n\geq 0.
\end{equation}
\begin{lemma}\label{lseq}
We have:
\begin{enumerate}
\item[\rm (i)] The sequence $\{\gamma_n\}_{n\geq 0}$ is decreasing and 
$$\ds \lim_{n\to \infty}\gamma_n=\begin{cases}
\ds \frac{N-k}{p-1} &\quad\mbox{ if }0<p<1\\[0.2cm]
\ds -\infty &\quad\mbox{ if } 1\leq p<\frac{N-1}{k-1}
\end{cases}.
$$
\item[\rm (ii)] Whenever $n\geq 0$ is such that  $\gamma_n>0$, there exists $C_n>0$ so that 
\begin{equation}\label{seqn}
v(x')\geq C_n |x'|^{-\gamma_n} \quad\mbox{ for a.e. } x'\in \R^{N-1}, |x'|> 1. 
\end{equation}
\end{enumerate}
\end{lemma}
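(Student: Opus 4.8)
The plan is to prove Lemma \ref{lseq} in two parts, addressing the analytic behaviour of the recursion $\gamma_{n+1}=p\gamma_n+k-N$ first, and then the iterative integral estimate second.

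For part (i), I would analyse the affine recursion $\gamma_{n+1}=p\gamma_n+(k-N)$ with $\gamma_0=k-1$ by standard means. If $p=1$ the sequence is arithmetic with common difference $k-N<0$, so $\gamma_n\to-\infty$. If $p\neq 1$ the fixed point is $\gamma^*=\frac{k-N}{1-p}=\frac{N-k}{p-1}$, and $\gamma_n-\gamma^*=p^n(\gamma_0-\gamma^*)$. One computes $\gamma_0-\gamma^*=(k-1)-\frac{N-k}{p-1}=\frac{(k-1)(p-1)-(N-k)}{p-1}=\frac{(k-1)p-(N-1)}{p-1}$, and the hypothesis $p<\frac{N-1}{k-1}$ gives $(k-1)p-(N-1)<0$. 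Thus for $0<p<1$ the denominator $p-1<0$, so $\gamma_0-\gamma^*>0$, and $\gamma_n\to\gamma^*=\frac{N-k}{p-1}>0$ monotonically from above; for $1<p<\frac{N-1}{k-1}$ one has $\gamma_0-\gamma^*<0$ and $p^n\to\infty$, so $\gamma_n\to-\infty$. Monotone decrease in all cases follows since $\gamma_1-\gamma_0=(p-1)\gamma_0+(k-N)=(p-1)(k-1)+(k-N)=(k-1)p-(N-1)<0$ and the recursion is order-preserving in the sense $\gamma_{n+1}-\gamma_n=p(\gamma_n-\gamma_{n-1})$, so the sign of consecutive differences is preserved (all negative).

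For part (ii), I would argue by induction on $n$. The base case $n=0$ is exactly Lemma \ref{kk}. For the inductive step, suppose $\gamma_n>0$ and \eqref{seqn} holds. Plug the bound $v(y')\geq C_n|y'|^{-\gamma_n}$ for $|y'|>1$ into \eqref{seq0}: for $|x'|>1$,
\begin{equation*}
v(x')\geq c\intl_{\R^{N-1}}\frac{v(y')^p}{|x'-y'|^{k-1}}dy'\geq cC_n^p\intl_{1<|y'|<2|x'|}\frac{|y'|^{-p\gamma_n}}{|x'-y'|^{k-1}}dy'.
\end{equation*}
On the annulus $1<|y'|<2|x'|$ we have $|x'-y'|\leq |x'|+|y'|<3|x'|$, hence $|x'-y'|^{-(k-1)}\geq (3|x'|)^{1-k}$ (using $k-1>0$), giving
\begin{equation*}
v(x')\geq cC_n^p(3|x'|)^{1-k}\intl_{1<|y'|<2|x'|}|y'|^{-p\gamma_n}dy'=c'|x'|^{1-k}\intl_1^{2|x'|}t^{N-2-p\gamma_n}dt.
\end{equation*}
Here I would split into cases according to the sign of $N-1-p\gamma_n$: if $N-1-p\gamma_n>0$ the integral is comparable to $|x'|^{N-1-p\gamma_n}$, yielding $v(x')\geq C_{n+1}|x'|^{1-k+N-1-p\gamma_n}=C_{n+1}|x'|^{-(p\gamma_n+k-N)}=C_{n+1}|x'|^{-\gamma_{n+1}}$; if $N-1-p\gamma_n=0$ one picks up a logarithm which still dominates $|x'|^{-\gamma_{n+1}}=|x'|^{0}$... wait, in that case $\gamma_{n+1}=1-N+p\gamma_n+ (k-1) $; more carefully, $-\gamma_{n+1} = N-k-p\gamma_n$, and $1-k+(N-1-p\gamma_n)=N-k-p\gamma_n=-\gamma_{n+1}$, consistent, and the extra log only helps; if $N-1-p\gamma_n<0$ the integral $\int_1^\infty t^{N-2-p\gamma_n}dt$ converges to a positive constant, so $v(x')\geq c''|x'|^{1-k}$, and since in this case $-\gamma_{n+1}=N-k-p\gamma_n<N-k-(N-1)=1-k$, i.e. $|x'|^{1-k}\geq|x'|^{-\gamma_{n+1}}$ for $|x'|>1$, the claimed bound follows a fortiori. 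This completes the induction. The main obstacle is bookkeeping the three sub-cases in the radial integral and verifying the exponent identities line up with the recursion \eqref{seqn1}; everything else is routine. One should also note \eqref{seqn} is only asserted while $\gamma_n>0$, which is exactly what is needed to later reach a contradiction (once $\gamma_n$ drops to $0$ or below, a different comparison — producing a non-integrable lower bound — finishes the nonexistence proof in the surrounding argument).
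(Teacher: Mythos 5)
Your proposal is correct in substance, but it takes a genuinely different route from the paper in both parts, so a comparison is worthwhile. For (i), you solve the affine recursion \eqref{seqn1} in closed form, $\gamma_n-\gamma^*=p^n(\gamma_0-\gamma^*)$ with $\gamma^*=\frac{N-k}{p-1}$, and read off both monotonicity and the limit; the paper argues more qualitatively (the sequence is decreasing, hence has a limit in $\R\cup\{-\infty\}$; for $0<p<1$ it is bounded below by $\frac{N-k}{p-1}$, so one can pass to the limit in \eqref{seqn1}; for $p\geq 1$ convergence is ruled out). Your closed-form argument is if anything cleaner. One slip to fix: for $0<p<1$ the limit $\frac{N-k}{p-1}$ is \emph{negative} (numerator $N-k>0$, denominator $p-1<0$), not positive as you assert in passing; this does not affect the lemma itself, but the negativity of the limit is exactly what the completion of the proof of Theorem \ref{thm1}(i2) uses to locate $n$ with $\gamma_n>0\geq\gamma_{n+1}$, so the sign matters downstream. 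For (ii), you bound the convolution in \eqref{seq0} from below by integrating over the annulus $1<|y'|<2|x'|$ and using $|x'-y'|<3|x'|$, which forces a three-case discussion of the radial integral $\int_1^{2|x'|}t^{N-2-p\gamma_n}\,dt$; the paper instead integrates over the exterior region $|y'|>|x'|$, where $|x'-y'|\leq 2|y'|$, so the step collapses to $\int_{|x'|}^\infty t^{-\gamma_{n+1}-1}\,dt=|x'|^{-\gamma_{n+1}}/\gamma_{n+1}$, convergent precisely because $\gamma_{n+1}>0$. Your exponent bookkeeping in the three cases is correct (in the borderline and convergent cases the bound $C|x'|^{-\gamma_{n+1}}$ indeed follows a fortiori for $|x'|>1$), and the base case via Lemma \ref{kk} matches the paper, so your induction closes. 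What the paper's choice of region buys is an extra dividend: the identical exterior-region computation, rerun at the first index where $\gamma_{n+1}\leq 0$, immediately produces the divergent integral that finishes the nonexistence argument, whereas your annulus estimate remains finite there and a separate comparison is needed --- a point you correctly acknowledge at the end.
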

\begin{proof}
(i) Since $0<p<\frac{N-1}{k-1}$, it is easy to check that $\{\gamma_n\}_{n\geq 0}$ is decreasing and thus, there exists $L:=\lim_{n\to \infty}\gamma_n\in \R\cup\{-\infty\}$. If $0<p<1$ then by induction $\gamma_n\geq \frac{N-k}{p-1}$. Thus, $\{\gamma_n\}_{n\geq 0}$ is convergent and passing to the limit in \eqref{seqn1} we find $L=\frac{N-k}{p-1}$. If $p=1$, then sequence cannot be convergent, otherwise, passing to the limit in \eqref{seqn1} we reach a contradiction; thus $L=-\infty$ in this case. If $1<p<\frac{N-1}{k-1}$ we can check that $\gamma_n<\frac{N-k}{p-1}$ for all $n\geq 0$ and then $L=-\infty$.

(ii) We prove \eqref{seqn} by induction. For $n=0$, the estimate \eqref{seqn} follows from Lemma \ref{kk}. Assume  now \eqref{seqn} holds for some $n\geq 0$ with $\gamma_n>0$ and $\gamma_{n+1}>0$. Then, if $|x'|>1$, from \eqref{seq0} we estimate
$$
\begin{aligned}
v(x')&\geq C_n\intl_{|y'|>|x'|}\frac{v(y')^p}{|x'-y'|^{k-1}} dy'\geq C_n\intl_{|y'|>|x'|}\frac{|y'|^{-p\gamma_n}}{|x'-y'|^{k-1}} dy'\\[0.3cm]
&\geq 2^{1-k} C_n\intl_{|y'|>|x'|}|y'|^{-p\gamma_n-k+1}dy',
\end{aligned}
$$
and thus
\begin{equation}\label{kk2}
v(x')\geq C_n\intl_{|x'|}^\infty t^{N-p\gamma_n-k-1}dt=C_n\intl_{|x'|}^\infty t^{-\gamma_{n+1}-1}dt=C_n|x'|^{-\gamma_{n+1}},
\end{equation}
since $\gamma_{n+1}>0$. This completes our proof by induction.
\end{proof}

\noindent{\bf Proof of Theorem \ref{thm1}(i2) completed.}  Let $\{\gamma_n\}_{n\geq 0}$ be the sequence defined in \eqref{seqn1}. Since by Lemma \ref{lseq}(i) we have $\lim_{n\to \infty}\gamma_n<0$ we can find $n\geq 0$ such that $\gamma_n>0\geq \gamma_{n+1}$. Then, \eqref{seqn} holds for $\gamma_n$. We proceed as in the proof of Lemma \ref{lseq}(ii) and from the estimate \eqref{kk2} we find
$$
v(x')\geq C\intl_{|x'|}^\infty t^{-\gamma_{n+1}-1}dt=\infty \quad\mbox{ since } \gamma_{n+1}\leq 0.
$$
Thus, \eqref{half1} has no solutions. \qed

\subsection{Proof of Theorem \ref{thm1}(ii)}
Assume $1<k<N-1$ and $p=\frac{N-1}{k-1}$. We notice that the estimate \eqref{kk1} in Lemma \ref{kk} still holds. Then, for $p=\frac{N-1}{k-1}$ we find 
$$
\intl_{\R^{N-1}} v(x')^p dx'\geq C\intl_{|x'|>1} |x'|^{1-N} dx'=\infty,
$$
which contradicts $u(\cdot, 0)=v\in L^p(\R^{N-1})$.\qed

\subsection{Proof of Theorem \ref{thm1}(iii)}
Throughout this section we assume
\begin{equation}\label{pk}
1<k<N-1\quad\mbox{ and } \quad p>\frac{N-1}{k-1}.
\end{equation}
We fix $s>1$ such that
\begin{equation}\label{pk1}
\frac{N-1}{N-k}<s<\frac{N-1}{N-k-1},
\end{equation}
and choose 
\begin{equation}\label{pk2}
q=\frac{(N-1)s}{N-1-(N-k-1)s} \quad\mbox{ and }\quad \gamma=1-\frac{N-1}{q}.
\end{equation}
From the above we have 
\begin{equation}\label{pk3}
p(k-1)s>N-1\,, \quad q>N-1\quad \mbox{ and }\quad 0<\gamma<1.
\end{equation}
The proof of Theorem \ref{thm1}(iii) relies on the following result. 
\begin{lemma}\label{mn}
Let $\nu$ be a positive Radon measure on $\R^N$ such that its Newtonian potential $U^\nu$ is continuous on $\R^N$ and there exists $A>0$ such that 
\begin{equation}\label{mn1}
U^\nu(x)\leq A(1+|x|)^{1-k}\quad\mbox{ for all }x\in \R^N.
\end{equation}
Then, there exists $M>0$ and $\lambda^*=\lambda^*(N,M, A,k,p)>0$ such that for all $0<\lambda<\lambda^*$ and all $R\geq 1$ the integral equation 
\begin{equation}\label{mn2}
u(x)=U^\nu(x)+\frac{2\lambda}{(N-2)\sigma_N} \intl_{B'_R} \intl_{B'_R} 
\frac{u(z',0)^p}{|x-(y',0)|^{N-2}|y'-z'|^k}dz' dy',
\end{equation}
has a solution $u\in C(\overline B_R)$. Furthermore, $u$ has the following properties:
\begin{enumerate}

\item[\rm (i)] $u(x)\leq M(1+|x|)^{1-k}$ for all $x\in \overline B_R$.
\item[\rm (ii)] $u-U^\nu\in C^{0, \gamma}(\overline B_R)$ and $\|u-U^\nu\|_{C^{0, \gamma}(\overline B_R)}\leq C(N,M,A,k,p)$.
\end{enumerate}
\end{lemma}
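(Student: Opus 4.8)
The plan is to solve \eqref{mn2} by the Schauder Fixed Point Theorem applied to a suitable closed convex subset of $C(\overline B_R)$, with constants chosen uniformly in $R\geq 1$. Fix $M>1$ to be specified and set
$$
\mathcal{K}_R=\big\{u\in C(\overline B_R): 0\leq u(x)\leq M(1+|x|)^{1-k}\ \text{ for all }x\in\overline B_R\big\}.
$$
This is a nonempty, bounded, closed and convex subset of $C(\overline B_R)$. For $u\in\mathcal{K}_R$ define $Tu$ by the right-hand side of \eqref{mn2}, where we integrate $u(z',0)^p$ — note $u(\cdot,0)$ makes sense as the restriction of a continuous function. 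First I would check that $T$ maps $\mathcal{K}_R$ into itself. Writing $v(x')=u(x',0)$ and using $v(z')\leq M(1+|z'|)^{1-k}$, the double integral in \eqref{mn2} is bounded, after first integrating in $y'$ via \eqref{selb0} with $a=1$, $b=k$ (restricting to $B_R'$ only enlarges the domain to $\R^{N-1}$, which is harmless since the integrand is positive), by
$$
C(N,k)\intl_{\R^{N-1}}\frac{v(z')^p}{|x-(z',0)|^{N-2}|x'-z'|^{k-1}}\,dz',
$$
wait — more carefully, \eqref{selb0} eliminates $y'$ from the kernel $|x-(y',0)|^{2-N}|y'-z'|^{-k}$ only partially because $x=(x',x_N)$ is not on the boundary; instead I would bound $|x-(y',0)|^{2-N}\le |x'-y'|^{2-N}$ is false in general, so the honest route is: estimate $v(z')^p\le M^p(1+|z'|)^{p(1-k)}=M^p(1+|z'|)^{-(k-1)p}$, then use \eqref{stan1} of Lemma \ref{lint1} (with $\beta=(k-1)p>N-1$ by \eqref{pk3}) to perform the $z'$-integral, obtaining $\le C_1(1+|y'|)^{-k}$, and finally use \eqref{stan6} of Lemma \ref{lint2} to perform the $y'$-integral, getting $\le C_1C_2(1+|x|)^{1-k}$. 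Combining with \eqref{mn1},
$$
Tu(x)\leq A(1+|x|)^{1-k}+\frac{2\lambda C_1 C_2 M^p}{(N-2)\sigma_N}(1+|x|)^{1-k},
$$
so if we pick $M=2A$ and then $\lambda^*$ small enough that $\frac{2\lambda^* C_1 C_2 (2A)^p}{(N-2)\sigma_N}\le A$, we get $Tu(x)\le 2A(1+|x|)^{1-k}=M(1+|x|)^{1-k}$ for all $0<\lambda<\lambda^*$; positivity of $Tu$ is clear. Hence $T(\mathcal{K}_R)\subseteq\mathcal{K}_R$.

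Next I would establish compactness and continuity of $T$. For compactness, observe that $Tu-U^\nu=\frac{2\lambda}{(N-2)\sigma_N}\,J_1(H_u^R)$ where $H_u^R(y')=\int_{B_R'}v(z')^p|y'-z'|^{-k}\,dz'\cdot\mathbbm{1}_{B_R'}(y')$ is (up to the cutoff) a Riesz potential $I_{N-1-k}^{N-1}$ of $v^p\mathbbm{1}_{B_R'}\in L^s(\R^{N-1})$: indeed $v^p\le M^p(1+|z'|)^{-(k-1)p}\in L^s$ globally since $(k-1)ps>N-1$ by \eqref{pk3}, hence $v^p\mathbbm{1}_{B_R'}\in L^s$ with norm bounded independently of $R$. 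With $\alpha=N-1-k$, the exponent condition $\frac{N-1}{\alpha+1}<s<\frac{N-1}{\alpha}$ is exactly \eqref{pk1}, and the associated $q$ from \eqref{HLS2} is the one in \eqref{pk2}; moreover $J_1(H_u^R)<\infty$ since everything is controlled by the global computation above. Corollary \ref{corlift} then gives
$$
\big\|J_1(H_u^R)\big\|_{C^{0,\gamma}(\R^N)}\leq C(N,s,k)\,\big\|v^p\mathbbm{1}_{B_R'}\big\|_{L^s(\R^{N-1})}\leq C(N,s,k)\,M^p\big\|(1+|\cdot|)^{-(k-1)p}\big\|_{L^s(\R^{N-1})},
$$
a bound uniform in $R$ and in $u\in\mathcal{K}_R$. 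Thus $\{Tu-U^\nu:u\in\mathcal{K}_R\}$ is bounded in $C^{0,\gamma}(\overline B_R)$, hence (Arzelà–Ascoli, $\gamma>0$) precompact in $C(\overline B_R)$; adding the fixed function $U^\nu$, $T(\mathcal{K}_R)$ is precompact in $C(\overline B_R)$. For continuity of $T$ on $\mathcal{K}_R$: if $u_j\to u$ uniformly on $\overline B_R$ then $v_j^p\to v^p$ uniformly on $B_R'$ (since $x\mapsto x^p$ is uniformly continuous on the bounded interval $[0,M(1+R)^{1-k}]$... more precisely on $[0,M]$, as $(1+|z'|)^{1-k}\le 1$), hence $\|v_j^p-v^p\|_{L^s(B_R')}\to 0$ because $B_R'$ has finite measure, and then the same Corollary \ref{corlift} estimate applied to $u_j-u$ gives $\|Tu_j-Tu\|_{C(\overline B_R)}\le \frac{2\lambda}{(N-2)\sigma_N}C(N,s,k)\|v_j^p-v^p\|_{L^s(B_R')}\to 0$.

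By the Schauder Fixed Point Theorem, $T$ has a fixed point $u\in\mathcal{K}_R$, which is the desired solution of \eqref{mn2}. Property (i) is built into membership in $\mathcal{K}_R$. For property (ii): $u-U^\nu=\frac{2\lambda}{(N-2)\sigma_N}J_1(H_u^R)$ and the uniform $C^{0,\gamma}$ bound on $J_1(H_u^R)$ displayed above gives $\|u-U^\nu\|_{C^{0,\gamma}(\overline B_R)}\le \frac{2\lambda^* C(N,s,k) M^p}{(N-2)\sigma_N}\|(1+|\cdot|)^{-(k-1)p}\|_{L^s(\R^{N-1})}=:C(N,M,A,k,p)$, a constant independent of $R$, which is exactly the claim.

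\medskip
\noindent\textbf{On the main obstacle.} The routine parts are the two integral estimates — but those are precisely Lemmas \ref{lint1} and \ref{lint2}, already proved, so they reduce to bookkeeping. The genuinely delicate point is the compactness of $T$: one cannot work directly with a Hölder bound on $Tu$ itself (the term $U^\nu$ is merely continuous, not Hölder, and worse, one has no $R$-uniform equicontinuity for $U^\nu$), so the argument must isolate the gain-of-regularity part $Tu-U^\nu=\text{const}\cdot J_1\circ I_{N-1-k}^{N-1}(v^p\mathbbm{1}_{B_R'})$ and invoke Corollary \ref{corlift}; verifying that the hypotheses of that corollary (namely $\frac{N-1}{\alpha+1}<s<\frac{N-1}{\alpha}$ with $\alpha=N-1-k$, together with $v^p\mathbbm{1}_{B_R'}\in L^s$ uniformly in $R$) hold precisely under \eqref{pk}–\eqref{pk3} is where the critical exponent $p>\frac{N-1}{k-1}$ enters in an essential way, through the requirement $p(k-1)s>N-1$ that makes the weight $(1+|z'|)^{-(k-1)p}$ lie in $L^s(\R^{N-1})$ with an $R$-free bound.
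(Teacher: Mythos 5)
Your proposal is correct and follows essentially the same route as the paper: the same invariant set of continuous functions bounded by $M(1+|x|)^{1-k}$, the same use of Lemmas \ref{lint1} and \ref{lint2} for the $R$-uniform pointwise bound, the same identification $Tu-U^\nu=\mathrm{const}\cdot J_1\circ I_{N-1-k}^{N-1}$ with Corollary \ref{corlift} giving the $R$-uniform $C^{0,\gamma}$ bound, and Schauder's theorem to conclude. The only (minor, easily repaired) blemish is in your continuity check — which the paper in fact omits altogether: Corollary \ref{corlift} controls only the H\"older seminorm, not $\|Tu_j-Tu\|_{C(\overline B_R)}$, so the sup-norm convergence should instead be obtained, e.g., from $|Tu_j(x)-Tu(x)|\leq \|v_j^p-v^p\|_{L^\infty(B_R')}\sup_{x\in\overline B_R}\int_{B_R'}\int_{B_R'}|x-(y',0)|^{2-N}|y'-z'|^{-k}\,dz'dy'$, the last factor being finite (with an $R$-dependent constant) by the Step-1 estimates.
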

\begin{proof}
Fix $R\geq 1$ and set
$$
\Sigma_R=\{ u\in C(\overline B_R): U^\nu(x)\leq u(x) \leq M(1+|x|)^{1-k}\mbox{ for all } x\in\overline B_R\},
$$ where $M>0$ is a suitable constant that will be chosen later in this proof.  For all $u\in \Sigma_R$ and $x\in \overline B_R$ we define 
$$
Tu(x)=U^\nu(x)+\frac{2\lambda}{(N-2)\sigma_N} \intl_{B'_R} \intl_{B'_R} 
\frac{u(z',0)^p}{|x-(y',0)|^{N-2}|y'-z'|^k}dz' dy'.
$$
\medskip

We divide the proof of Lemma \ref{mn} into four steps.

\noindent{\bf Step 1:} $T:\Sigma_R\to C(\overline B_R)$ is well defined. 

Indeed, we write
$$
Tu=U^\nu+\frac{2\lambda}{(N-2)\sigma_N} F_Ru,
$$
where
$$
F_Ru(x)=\intl_{B'_R} \intl_{B'_R} 
\frac{u(z',0)^p}{|x-(y',0)|^{N-2}|y'-z'|^k}dz' dy' \quad\mbox{ for all }x\in \overline B_R.
$$
Let $u\in \Sigma_R$. By  Lemma \ref{lint1} with $\beta=p(k-1)>N-1$ we have
$$
\intl_{B'_R} \frac{u(z',0)^p}{|y'-z'|^k}dz' \leq M^p  \intl_{\R^{N-1}}\frac{dz'}{|y'-z'|^{k} (1+|z'|)^{p(k-1)}  }\leq C_1 M^p (1+|y'|)^{-k}.
$$
Now, by Lemma \ref{lint2} we obtain for all $x\in \overline B_R$ that
\begin{equation}\label{v1}
F_Ru(x) \leq C_1 M^p \intl_{\R^{N-1}}\frac{dy'}{ |x-(y',0)|^{N-2} (1+|y'|)^{k}  }\leq C_1 C_2 M^p (1+|x|)^{1-k}.
\end{equation}
We should point out that the above constants $C_1, C_2>0$ are independent of $R\geq 1$, they only depend on $N,k$ and $p$ as stated in Lemma \ref{lint1} and Lemma \ref{lint2}.
On the other hand, we see that 
$$
F_Ru=J_1 f\quad \mbox{ where }\quad f=(I_{N-k-1}^{N-1} g)\chi_{B_R'} \quad\mbox{ and }g=u^p\chi_{B_R'},
$$
where $I_{N-k-1}^{N-1} $ and $J_1$ are the Riesz and the lifting operators introduced in Section 2 and $\chi_{B_R'}$ denotes the indicator function of the ball $B_R'\subset \R^{N-1}$. Note that by \eqref{pk3}$_1$ we have $g\in L^s(\R^{N-1})$ and
$$
\|g\|_{L^s(\R^{N-1})}\leq M^p \Big(\intl_{\R^{N-1}} (1+|x'|)^{-p(k-1)s} dx'\Big)^{1/s}\leq C(N,k,p,s)M^p.
$$
In virtue of Corollary \ref{corlift} we now deduce
\begin{equation}\label{v2}
\sup_{\substack{x,z\in \R^N\\ x\neq z}}\frac{|F_Ru (x)-F_Ru(z)|}{|x-z|^{\gamma}}\leq C(N,k,p,s)\|g\|_{L^s(\R^{N-1})}\leq C(N,k,p,s)M^p,
\end{equation}
where $\gamma=\gamma(N,k,s)\in (0,1)$ is defined in \eqref{pk2}$_2$.
In particular, $F_R u$ is a continuous mapping and from \eqref{v2} and \eqref{v1} we have
\begin{equation}\label{v3}
\begin{aligned}
\|Tu- U^\nu\|_{C^{0,\gamma}(\overline B_R)}&=\frac{2\lambda}{(N-2)\sigma_N} \|F_R u\|_{C^{0,\gamma}(\overline B_R)}\\[0.3cm]
&\leq \frac{2\lambda}{(N-2)\sigma_N}  \big(C_1C_2+C(N,k,p,s)\big)M^p.
\end{aligned}
\end{equation}
Hence, we have proved that $Tu\in C(\overline B_R)$ and that $Tu- U^\nu$ is bounded in $C^{0,\gamma}(\overline B_R)$.

\medskip

\noindent{\bf Step 2:} There exists $M>0$ and $\lambda^*=\lambda^*(N,M, A,k,p)>0$ such {\color{red} that}
$$
T(\Sigma_R)\subset \Sigma_R\quad\mbox{ for all } 0<\lambda<\lambda^*\mbox{ and }R\geq 1.
$$
From \eqref{v1}  and our hypothesis \eqref{mn1} we have
\begin{equation}\label{v4}
Tu(x)\leq \Big(A+\frac{2\lambda}{(N-2)\sigma_N} C_1C_2M^p\Big)(1+|x|)^{1-k} \quad\mbox{ for all }x\in \overline B_R,
\end{equation}
where $C_1, C_2>0$ depend only on $N,k,p$ but not on $R\geq 1$. Fix now $M>2A$ and define 
$$
\lambda^*=\frac{(N-2)\sigma_N(M-A)}{2C_1C_2M^p}.
$$
Then, \eqref{v4} yields $Tu(x)\leq M(1+|x|)^{1-k}$ for all $0<\lambda<\lambda^*$ and all $x\in \overline B_R$, which concludes our claim in this step.

\medskip

\noindent{\bf Step 3:} $T(\Sigma_R)$ is compactly contained in $C(\overline B_R)$.

We observe from \eqref{v3} that for all $u\in \Sigma_R$ we have $Tu-U^\nu$ is bounded in $C^{0,\gamma}(\overline B_R)$.  Thus,
$$
\begin{aligned}
T(\Sigma_R)&=U^\nu+F_R(\Sigma_R)\\
&\subset U^\nu+\{w\in C^{0,\gamma}(\overline B_R): \|w\|_{C^{0,\gamma}(\overline B_R)}\leq C\}\hookrightarrow C(\overline B_R).
\end{aligned}
$$
This shows that $T(\Sigma_R)$ is compactly contained in $C(\overline B_R)$.

\medskip

\noindent{\bf Step 4:} Existence of a solution to \eqref{mn2}.

By Schauder Fixed Point Theorem,  for all $0<\lambda<\lambda^*$ and all $R\geq 1$ we may find $u\in \Sigma_R$ such that $Tu=u$. This means that $u$ satisfies \eqref{mn2}. From the definition of $\Sigma_R$ and \eqref{v3} we deduce that $u$ has the properties (i) and (ii) in the statement of Lemma \ref{mn}.
\end{proof}

By Theorem 1.9 in \cite{L72}, there exists a sequence of increasing measures $\{\mu_n\}_{n\geq 1}$ such that:
\begin{itemize}
\item $U^{\mu_n}$ is continuous in $\R^N$.
\item $\{U^{\mu_n}\}_{n\geq 1}$ converges pointwise to $U^\mu$ in  $\R^N$.
\end{itemize}

Let $\nu_n=\mu_n+\overline\mu_n$. We observe from \eqref{munu} and the properties of sequence $\{\mu_n\}_{n\geq 1}$ that:
\begin{itemize}
\item $U^{\nu_n}=U^{\mu_n}+U^{\overline\mu_n}$ is continuous in $\R^N$.
\item We have 
$$
U^{\nu_n}(x)\leq U^{\mu}(x)+U^{\overline\mu}(x)\leq 2C(\mu)(1+|x|)^{1-k}\quad\mbox{ a.e. in }\R^N.
$$
\end{itemize}
We may thus apply Lemma \ref{mn} to deduce the existence of  $\lambda^*>0$ and $u_n\in C(\overline B_n)$ such that
\begin{equation}\label{mn7}
u_n(x)=U^{\nu_n}(x)+\frac{2\lambda}{(N-2)\sigma_N} \intl_{B'_n} \intl_{B'_n} 
\frac{u_n(z',0)^p}{|x-(y',0)|^{N-2}|y'-z'|^k}dz' dy' \quad\mbox{ in }\overline B_n,
\end{equation}
for all $0<\lambda<\lambda^*$.
Furthermore, $u_n$ satisfies
\begin{equation}\label{mn8}
u_n(x)\leq M(1+|x|)^{1-k}\quad\mbox{ for all }x\in \overline B_n
\end{equation}
and 
\begin{equation}\label{mn9}
u_n-U^{\nu_n}\in C^{0, \gamma}(\overline B_n) \quad \mbox{ with }\quad 
\|u_n-U^{\nu_n}\|_{C^{0, \gamma}(\overline B_n)}\leq C=C(N,M,\mu,k,p).
\end{equation}
Let us point out that the constants $M,C>0$ in \eqref{mn8} and \eqref{mn9} are independent on $n$. Since
$$
d\overline \mu_n(y)=d\mu_n(\overline y),
$$
we see that 
$$
U^{\nu_n}(x)=U^{\mu_n}(x)+U^{\overline \mu_n}(x)=\int_{\R^N_+} G(x,y)d\mu_n(y).
$$
Thus, by the properties of $\{\mu_n\}_{n\geq 1}$ we have 
\begin{equation}\label{mn10}
U^{\nu_n}(x)\to \intl_{\R^N_+} G(x,y)d\mu(y)\quad\mbox{ as }n\to \infty, \quad\mbox{ for all }x\in \R^N_+.
\end{equation}
By \eqref{mn9}, the sequence $\{u_n-U^{\nu_n}\}_{n\ge 1}$ is bounded in $C^{0, \gamma}(\overline B_1)$. Since the embedding $C^{0, \gamma}(\overline B_1)\hookrightarrow C(\overline B_1)$ is compact, one can find a subsequence $\{u_n^1-U^{\nu_n^1}\}_{n\ge 1}\subset \{u_n-U^{\nu_n}\}_{n\ge 1}$ which converges in $C(\overline B_1)$. Consider now the sequence $\{u_n^1-U^{\nu_n^1}\}_{n\ge 1}$ which is bounded in $C^{0, \gamma}(\overline B_2)$. By the compactness of the embedding $C^{0, \gamma}(\overline B_2)\hookrightarrow C(\overline B_2)$ one may now find a subsequence $\{u_n^2-U^{\nu_n^2}\}_{n\ge 2}\subset \{u_n^1-U^{\nu_n^1}\}_{n\ge 1}$ which converges in $C(\overline B_2)$. Inductively, for all $m\geq 1$ one may find a subsequence 
$$
\{u_n^m-U^{\nu_n^m}\}_{n\ge m}\subset \{u_n^{m-1}-U^{\nu_n^{m-1}}\}_{n\ge m-1}
$$
that converges in $C(\overline B_m)$. Consider now the diagonal sequence $\{u_n^n-U^{\nu_n^n}\}_{n\ge 1}$ which converges locally  in $C(\R^N)$. Note that by \eqref{mn10},  $\{U^{\nu_n^n}\}_{n\geq 1}$ converges pointwise to the Green potential of $\mu$. Hence,
$u_n^n=(u_n^n-U^{\nu_n^n})+U^{\nu_n^n}$ is pointwise convergent to some function $u$. Also, $u_n^n$ satisfies \eqref{mn8}. Thus, we may use the Lebesgue Dominated Convergence Theorem and pass to the limit in \eqref{mn7} (in which $u_n$ and $U^{\mu_n}$ are replaced with $u_n^n$ and $U^{\nu_n^n}$ respectively). We obtain 
$$
u(x)=\intl_{\R^N_+} G(x,y)d\mu(y)+\frac{2\lambda}{(N-2)\sigma_N} \intl_{\R^{N-1}} \intl_{\R^{N-1}} 
\frac{u(z',0)^p}{|x-(y',0)|^{N-2}|y'-z'|^k}dz' dy' 
$$
almost everywhere in $\R^N_+$.
Finally, we notice that from \eqref{mn8}, $u$ satisfies $u(x)\leq M(1+|x|)^{1-k}$ a.e. in $\R^N_+$. Thus, by the properties of the lifting operator in Corollary \ref{corlift} we deduce  \eqref{vgre}. This concludes the proof of Theorem \ref{thm1}. \qed

\section{Proof of Theorem \ref{thm2}}
Before we proceed to the proof of Theorem \ref{thm2}, we recall the following result obtained in \cite{CLO05,CLO06} related to the regular solutions of the integral equation 
\begin{equation}\label{clo}
v(x')=c\intl_{\R^{N-1}}  \frac{v(y')^p}{|x'-y'|^{k-1}} dy' \quad\mbox{ a.e. in }\R^{N-1},
\end{equation}
where $1<k<N-1$, $p>1$ and $c>0$. Recall that positive solutions $v$ of \eqref{clo} are called regular if $v\in L^{\frac{2(N-1)}{k-1}}_{loc}(\R^{N-1})$.

\begin{lemma}\label{cloo} {\rm (see \cite{CLO05,CLO06})}
\begin{enumerate}
\item[\rm (i)] If $1<p<p^{**}$, then \eqref{clo} has no regular solutions.

\item[\rm (ii)] If  $p=p^{**}$, then all regular solutions of \eqref{clo} are of the form
\begin{equation}\label{regs0}
v_{\zeta', t}(x')=c \Big(\frac{t}{t^2+|x'-\zeta'|^2}\Big)^{\frac{k-1}{2}},
\end{equation}
for some $\zeta'\in \R^{N-1}$ and $c,t>0$.
\end{enumerate}
\end{lemma}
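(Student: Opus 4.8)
The statement is, after a change of notation, exactly the classification of \emph{regular} solutions of Riesz-potential integral equations proved by Chen, Li and Ou \cite{CLO05, CLO06}, and the plan is to reduce to it. Put $\alpha=N-k\in(1,N-1)$; then, by \eqref{rff}, equation \eqref{clo} reads
\begin{equation}\label{cloriesz}
v=c\,I_\alpha^{N-1}(v^p)\quad\mbox{ in }\R^{N-1},
\end{equation}
a Riesz-potential equation of order $\alpha$ in $\R^{N-1}$, whose Hardy--Littlewood--Sobolev critical exponent is $\frac{(N-1)+\alpha}{(N-1)-\alpha}=\frac{2(N-1)}{k-1}-1=p^{**}$, while the regularity class in the statement is exactly $v\in L^{p^{**}+1}_{loc}(\R^{N-1})=L^{2(N-1)/(k-1)}_{loc}(\R^{N-1})$. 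With this dictionary, part (ii) is the CLO classification theorem and part (i) is their subcritical Liouville-type theorem; I sketch the two arguments below.

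For part (ii) I would start with a regularity bootstrap for \eqref{cloriesz}: splitting the kernel into its restrictions to $\{|x'-y'|<1\}$ and $\{|x'-y'|\ge1\}$ and iterating the Sobolev inequality \eqref{HLS1} (in the spirit of the estimates of Section 2), one upgrades $v\in L^{p^{**}+1}_{loc}$ to higher and higher local integrability, then to local boundedness; hence $v$ is continuous and strictly positive on $\R^{N-1}$ and, in addition, decays at infinity with a quantitative rate. Then I would run the method of moving planes in integral form: for a unit vector $e$ and a hyperplane $T_\mu=\{x'\cdot e=\mu\}$, with $x'_\mu$ the reflection of $x'$ in $T_\mu$, one uses \eqref{cloriesz} and the monotonicity of the kernel to show that the reflected difference $v(x'_\mu)-v(x')$ has a fixed sign on the half-space $\{x'\cdot e<\mu\}$ for $\mu$ sufficiently negative, and then slides $\mu$ up to its critical value, where $v$ must be symmetric about $T_\mu$ (a strict comparison excluding the other alternative). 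Carried out in every direction — after composing with a Kelvin-type transform $\tilde v(x')=|x'-x'_0|^{-(k-1)}\,v\big(x'_0+(x'-x'_0)/|x'-x'_0|^2\big)$, which in the critical case sends solutions of \eqref{cloriesz} to solutions of the same equation and is used to control the behaviour near infinity — this forces $v$ to be radially symmetric and decreasing about some $\zeta'\in\R^{N-1}$; plugging a radial $v$ back into \eqref{cloriesz} and solving the resulting one-dimensional identity yields the explicit profile \eqref{regs0}.

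For part (i), the same moving-plane scheme shows that any regular solution of \eqref{cloriesz} must be radially symmetric and strictly decreasing about some point $\zeta'$. I would then derive a Pohozaev-type identity directly from the integral equation: testing \eqref{cloriesz} against $(x'-\zeta')\cdot\nabla v$ and using that the kernel $|z'|^{-(N-1-\alpha)}$ is homogeneous of degree $-(N-1-\alpha)$ (after symmetrizing in the two integration variables, the decay estimates from the bootstrap ensuring that every integral converges and that no contribution survives from infinity) one obtains a relation of the form $\big(\frac{N-1}{p+1}-\frac{k-1}{2}\big)\,\mathcal{I}=0$, where $\mathcal{I}=\intl_{\R^{N-1}}\intl_{\R^{N-1}}\frac{v(x')^p\,v(y')^p}{|x'-y'|^{N-1-\alpha}}\,dx'\,dy'$ is finite and strictly positive. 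This forces $\frac{N-1}{p+1}=\frac{k-1}{2}$, i.e. $p=p^{**}$, so \eqref{clo} admits no regular solution when $1<p<p^{**}$.

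The step I expect to be the main obstacle is the regularity and decay bootstrap: turning a merely measurable solution of \eqref{cloriesz} lying in $L^{p^{**}+1}_{loc}$ into a continuous, strictly positive function with a quantitative decay rate at infinity. This is precisely what makes the moving-plane scheme start (one needs the sign of the reflected difference for hyperplanes far from the symmetry hyperplane) and, in the subcritical case, what makes the Pohozaev identity rigorous, by guaranteeing that all the integrals involved converge and that there is no boundary term at infinity.
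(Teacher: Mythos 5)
Your reduction is exactly what the paper does: Lemma \ref{cloo} is not proved there but quoted from \cite{CLO05,CLO06}, and your dictionary ($\alpha=N-k$ in $\R^{N-1}$, so that the Hardy--Littlewood--Sobolev critical exponent $\frac{(N-1)+\alpha}{(N-1)-\alpha}$ equals $p^{**}$ and the regular class is $L^{2(N-1)/(k-1)}_{loc}$) is precisely the identification of \eqref{clo} with the Chen--Li--Ou equation on which that citation rests. Your additional moving-plane/Kelvin-transform and Pohozaev sketch only outlines the cited proofs, so the approach coincides with the paper's, which simply stops at the citation.
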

Part (i) in Lemma \ref{cloo} was obtained in \cite{CLO05} while part (ii) was derived in \cite{CLO06}.

\noindent{\bf Proof of Theorem \ref{thm2}.} (i)-(ii) Let us rewrite \eqref{l3} in the form
\begin{equation}
\label{rew}
u(x) = \frac{2\lambda}{(N-2)\sigma_N} \intl_{\R^{N-1}}  \frac{H_u(y')}{|x-(y',0)|^{N-2}} dy'\quad \mbox{ a.e. in }\R^N_+,
\end{equation}
where $H_u$ was introduced in \eqref{hu}.
Let $x'\in \R^{N-1}$ be such that \eqref{l1} holds. As in the proof of \eqref{mmb1} we find 
\begin{equation}\label{mmb3}
\infty> u(x', 0) \geq C \intl_{\R^{N-1}}  \frac{H_u(y')}{|x'-y'|^{N-2}} dy',
\end{equation}
for some constant $C>0$.
This implies that for almost all $x'\in \R^{N-1}$ the function
$$
\R^{N-1} \ni y'\longmapsto \frac{H_u(y')}{|x'-y'|^{N-2}} \quad \mbox{ is integrable on } \R^{N-1}.
$$
Since
$$
\frac{H_u(y')}{|x-(y',0)|^{N-2}}\leq \frac{H_u(y')}{|x'-y'|^{N-2}}\quad\mbox{for all }x\in \R^N_+, 
$$
we can use the Lebesgue Dominated Convergence Theorem to pass to the limit in \eqref{rew} with $x_N\to 0$. Thus,
$$
u(x',0)=\frac{2\lambda}{(N-2)\sigma_N} \intl_{\R^{N-1}} \intl_{\R^{N-1}} 
\frac{u(z',0)^p}{|x'-y'|^{N-2}|y'-z'|^k}dz' dy'\quad\mbox{ a.e. in }\R^{N-1}.
$$
By Fubini Theorem, we have
$$
u(x',0)=\frac{2\lambda}{(N-2)\sigma_N} \intl_{\R^{N-1}} u(z',0)^p \Bigg(\intl_{\R^{N-1}} 
\frac{ dy'}{|x'-y'|^{N-2}|y'-z'|^k}\Bigg) dz' \quad\mbox{ a.e. in }\R^{N-1}.
$$
If $0<k\leq 1$, we showed in Section 3.1 that the inner integral in the above equality is divergent. Hence, we must have $1<k<N-1$ and by 
\eqref{selb} we derive
$$
u(x',0)= c\intl_{\R^{N-1}}\frac{u(y',0)^p}{|x'-y'|^{k-1}} dy'\quad\mbox{ a.e. in } \R^{N-1}.
$$
Thus, $v(x')=u(x',0)$ satisfies \eqref{clo} which is a particular case of the integral inequality \eqref{seq0}. We proved in Section 3.2 and in Section 3.3  that \eqref{clo} has no solutions if $1<k<N-1$ and $0<p<\frac{N-1}{N-k}$. Conversely, assume now that \eqref{iff} holds and let us prove that the function $u$ defined in \eqref{munu0} is a solution of \eqref{half1} for some constant $C>0$. As above, by Lebesgue Dominated Convergence Theorem and formula \eqref{selb}, \eqref{munu0} yields
\begin{equation}\label{leb}
u(x',0)=C\intl_{\R^{N-1}} \frac{dy'}{|x'-y'|^{N-2} |y'|^{1+\frac{N-k}{p-1}} }=c|x'|^{-\frac{N-k}{p-1}} \quad\mbox{ a.e. in } \R^{N-1}.
\end{equation}
Now, by formula \eqref{selb0} with $a=N-k-1<b=\frac{p(N-k)}{p-1}<N-1$ we have
$$
\begin{aligned}
\frac{2\lambda}{(N-2)\sigma_N} &\intl_{\R^{N-1}} \intl_{\R^{N-1}} 
\frac{u(z',0)^p}{|x-(y',0)|^{N-2}|y'-z'|^k}dz' dy'\\[0.3cm]
&=C \intl_{\R^{N-1}} \intl_{\R^{N-1}} 
\frac{|z'|^{-\frac{p(N-k)}{p-1}} }{|x-(y',0)|^{N-2}|y'-z'|^k}dz' dy'\\[0.3cm]
&=C \intl_{\R^{N-1}} \Bigg(\intl_{\R^{N-1}} 
\frac{dz'}{|y'-z'|^k |z'|^{\frac{p(N-k)}{p-1}} } \Bigg)\frac{dy' }{|x-(y',0)|^{N-2}}\\[0.3cm]
&=C \intl_{\R^{N-1}} \frac{dy' }{|x-(y',0)|^{N-2} |y'|^{1+\frac{N-k}{p-1}} }\\[0.3cm]
&=u(x).
\end{aligned}
$$
Thus, $u$ satisfies \eqref{l3}. Using \eqref{leb} we see that
$$
\begin{aligned}
u\mbox{ is regular }& \Longleftrightarrow  u(\cdot, 0)\in L^{\frac{2(N-1)}{k-1}}_{loc}(\R^{N-1}) \\
&\Longleftrightarrow u(\cdot, 0)\in 
L^{\frac{2(N-1)}{k-1}}(B_1')\\
&\Longleftrightarrow p>p^{**}.
\end{aligned}
$$
(iii) If $u$ is a regular solution of \eqref{half1}, then $v(x')=u(x', 0)$ is a regular solution of  \eqref{clo}. We proved in Section 3.2 that \eqref{clo} has no solutions if $0<k\leq 1$ or $0<p\leq 1$. For $1<k<N-1$ and $1<p<p^{**}$, the nonexistence of regular solutions follows from Lemma \ref{cloo}(i).

(iv) Assume $1<k<N-1$, $p=p^{**}$ and let $u$ be a regular solution of \eqref{half1}. Then $v(x')=u(x',0)$ is a regular solution of \eqref{clo}. Thus, by Lemma \ref{cloo}(ii) we have 
\begin{equation}\label{regss}
u(x', 0)=v_{\zeta',t}(x')=c \Big(\frac{t}{t^2+|z'-\zeta'|^2}\Big)^{\frac{k-1}{2}},
\end{equation}
for some $\zeta'\in \R^{N-1}$ and $c,t>0$. Now, \eqref{l3} and \eqref{regss} yield \eqref{regs}. 

Conversely, assume now that $u$ is given by \eqref{regs} and let us show that it satisfies \eqref{l3} for a conveniently chosen constant $C(N,k,\lambda)>0$. Indeed, from \eqref{regs} and \eqref{selb} for all $x'\in \R^{N-1}$ we have
\begin{equation}\label{ga1}
\begin{aligned}
u_{\zeta',t}(x',0)&=C \intl_{\R^{N-1}} \Big(\frac{t}{t^2+|z'-\zeta'|^2}\Big)^{N-\frac{k+1}{2}}\Bigg(\intl_{\R^{N-1}} 
\frac{\ds dy'}{|x'-y'|^{N-2}|y'-z'|^k}\Bigg) dz'\\[0.3cm]
&=C  \intl_{\R^{N-1}} 
\frac{\Big(\ds \frac{t}{t^2+|z'-\zeta'|^2}\Big)^{N-\frac{k+1}{2}} }{|x'-z'|^{k-1}}dz'.
\end{aligned}
\end{equation}
By Lemma \ref{cloo}(ii), the function $v_{\zeta',t}(x')$ defined in \eqref{regs0} is a solution of \eqref{clo} for some $c>0$, so 
\begin{equation}\label{ga2}
\begin{aligned}
\intl_{\R^{N-1}} 
\frac{\Big(\ds \frac{t}{t^2+|z'-\zeta'|^2}\Big)^{N-\frac{k+1}{2}} }{|x'-z'|^{k-1}}dz'&= \intl_{\R^{N-1}} 
\frac{v_{\zeta',t}(z')^{p} }{|x'-y'|^{k-1}}dy'\\[0.3cm]
&=\frac{ v_{\zeta',t}(x')}{c}\\[0.3cm]
&=C \Big(\frac{t}{t^2+|x'-\zeta'|^2}\Big)^{\frac{k-1}{2}}.
\end{aligned}
\end{equation}
Combining \eqref{ga1} and \eqref{ga2}, we find
\begin{equation}\label{regsss0}
u_{\zeta',t}(x',0)=C\Big(\frac{t}{t^2+|x'-\zeta'|^2}\Big)^{\frac{k-1}{2}} \quad\mbox{  in }\R^{N-1}.
\end{equation}
Hence,
\begin{equation}\label{regsss}
C\Big(\frac{t}{t^2+|x'-\zeta'|^2}\Big)^{N-\frac{k+1}{2}}=u_{\zeta',t}(x',0)^p \quad\mbox{  in }\R^{N-1}.
\end{equation}
Using \eqref{regsss} into \eqref{regs}, we find that $u$ satisfies \eqref{l3}. By the Lebesgue Dominated Convergence Theorem, conditions \eqref{l1} and \eqref{l10} are now easy to check. From \eqref{regsss0} we also have that $u_{\zeta', t}$ is regular. This concludes the proof of our Theorem \ref{thm2}.\qed

\section*{Acknowledgement}

The author would like to thank the anonymous Referees for pointing out some improvements that led to the current version of the manuscript.

\end{document}